   \def\MR#1{}
\newtheorem{theorem}{Theorem}[section]
\newtheorem{lemma}[theorem]{Lemma}
\newtheorem{proposition}[theorem]{Proposition}
\newtheorem{corollary}[theorem]{Corollary}
\newtheorem{definition}[theorem]{Definition}
\theoremstyle{definition}
\newtheorem{remark}[theorem]{\textbf{Remark}}
\newtheorem{question}[theorem]{\textbf{Question}}
\theoremstyle{remark}
\newtheorem*{claim*}{\textsc{Claim}}
\renewcommand{\bf}{\mathbf}
\renewcommand{\emptyset}{\varnothing}
\renewcommand{\rho}{\varrho}
\renewcommand{\ast}{\star}
\providecommand{\HHb}{\mathbf{H}}
\providecommand{\AAc}{\mathscr{A}}
\providecommand{\dd}{\mathsf{d}}
\providecommand{\NNb}{\mathbf{N}}
\providecommand{\PPb}{\mathbf{P}}
\providecommand{\PPc}{\mathcal{P}}
\providecommand{\RRb}{\mathbf{R}}
\providecommand{\ZZb}{\mathbf{Z}}
\providecommand\llb{\llbracket}
\providecommand\rrb{\rrbracket}
\begin{document}

\title{On small sets of integers}

\author{Paolo Leonetti}
\address{Institute of Analysis and Number Theory, Graz University of Technology | Kopernikusgasse 24/II, 8010 Graz, Austria}
\curraddr{Department of Statistics, Universit\`a ``Luigi Bocconi'' | via Roentgen 1, 20136 Milano, Italy} 
\email{leonetti.paolo@gmail.com}
\urladdr{https://sites.google.com/site/leonettipaolo/}

\author{Salvatore Tringali}
\address{School of Mathematical Sciences,
	Hebei Normal University | Shijiazhuang, Hebei province, 050024 China}
\email{salvo.tringali@gmail.com}
\urladdr{http://imsc.uni-graz.at/tringali}

\thanks{P.L. was supported by the Austrian Science Fund (FWF), project F5512-N26.}

\subjclass[2010]{Primary 11B05, 28A10; Secondary 39B62}

\keywords{Ideals on sets; large and small sets (of integers); upper and lower densities.}

\begin{abstract}
\noindent{}An upper quasi-density on $\bf H$ (the integers or the non-negative integers) is a real-valued subadditive function $\mu^\ast$ defined on the whole power set of $\mathbf H$ such that $\mu^\ast(X) \le \mu^\ast({\bf H}) = 1$ and $\mu^\ast(k \cdot X + h) = \frac{1}{k}\, \mu^\ast(X)$ for all $X \subseteq \bf H$, $k \in {\bf N}^+$, and $h \in \bf N$, where $k \cdot X := \{kx: x \in X\}$; and an upper density on $\bf H$ is an upper quasi-density on $\bf H$ that is non-decreasing with respect to inclusion. We say that a set $X \subseteq \bf H$ is small if $\mu^\ast(X) = 0$ for every upper quasi-density $\mu^\ast$ on $\bf H$.

Main examples of upper densities are given by the upper analytic, upper Banach, upper Buck, and upper P\'olya densities, along with the uncountable family of upper $\alpha$-densities, where $\alpha$ is a real parameter $\ge -1$ (most notably, $\alpha = -1$ corresponds to the upper logarithmic density, and $\alpha = 0$ to the upper asymptotic density).
 
It turns out that a subset of $\bf H$ is small if and only if it belongs to the zero set of the upper Buck density on $\bf Z$. This allows us to show that many interesting sets are small, including the integers with less than a fixed number of prime factors, counted with multiplicity; the numbers represented by a binary quadratic form with integer coefficients whose discriminant is not a perfect square; and the image of $\bf Z$ through a non-linear integral polynomial in one variable.
\end{abstract}
\maketitle
\thispagestyle{empty}

\section{Introduction}\label{sec:introduction}

It is not infrequently the case in number theory and other fields that one is faced with the problem of determining whether a given set of integers is ``small'' in a suitable sense, driven by the idea that ``largeness implies structure'' and with the proviso that if a set is ``small'' then its complement is ``large''.
In such cases, one is not necessarily interested in ``quantifying the largeness'' of a set $X$, but only in establishing whether $X$ is large. A classic approach is to let the collection of all ``small sets'' be an \emph{ideal}, i.e., a family $\mathcal I$ of proper subsets of a certain ``ambient set'' such that $\mathcal I$ is closed under taking subsets and finite unions.
In fact, many natural ideals on $\mathbf N$ can be represented as the inverse image of $0$ through an appropriate ``measure of largeness'' $f: \mathcal{P}(\mathbf N) \to \mathbf{R}$, cf.~\cite{Sol99} and see \S~\ref{sec:notation} for notation.
This has eventually led to the study of a diversity of set functions that, while retaining fundamental features of measures, are better suited than measures to certain applications;
some of these ``surrogate measures'', recently considered in \cite{LT, LT17} and called \emph{upper quasi-densities} (Definition \ref{def:upperdensity}), are also the subject of the present work. 

In detail, the plan of the paper is as follows.
In \S~\ref{sec:upper_densities}, we introduce the notion of ``small set'' specifically used in the present work (Definition \ref{def:Dsmall}) and characterize small sets in terms of the zero set of the upper Buck density (Theorem \ref{thm:2.4}), a little known ``upper density'', first considered in \cite{Bu0}, that happens to play a central role in the theory.
Then, in \S\S~\ref{sec:preliminaries} and \ref{sec:homogeneous}, we prove that the following sets are small: the integers with less than a fixed number of prime factors, counted with multiplicity (Corollary \ref{cor:upper_density_of_primes}); 
the non-negative integers whose base-$b$ representation does not contain a given non-empty string of digits (Corollary \ref{thm:digits}); 
the image of $\mathbf{Z}$ through a non-linear integer polynomial in one variable (Theorem \ref{thm:polynomialandDsmallsets}); 
the inverse image of the primes under a non-constant integral polynomial in one variable (Theorem \ref{thm:polynomialprimeandDsmall}); 
the numbers represented by a binary quadratic form with integer coefficients whose discriminant is not a perfect square (Theorem \ref{thm:4.2}). 
We conclude in \S~\ref{sec:closings} with some questions and remarks.

All in all, we can thus extend and unify several ``independent results''
so far only known for some of the classic ``upper densities'' encountered in the literature. In particular, Theorem \ref{thm:4.2} generalizes the well-known fact that the asymptotic density of the non-negative integers representable by a sum of two squares is zero.

\subsection{Generalities}
\label{sec:notation}

We refer to \cite{LT} for most notation, terminology, and conventions used through this paper. In particular, we write $\RRb$, $\ZZb$, and $\NNb$, resp., for the sets of reals, integers, and non-negative integers;
and unless noted otherwise, we reserve the letters $h$, $i$, and $k$ (with or without subscripts) for non-negative integers, the letters $m$ and $n$ for positive integers, and the letter $s$ for a real number. 

We let $\mathbf{P} \subseteq \bf Z$ be the set of (positive or negative) primes, and for all $k \in \mathbf Z$ and $m \in \mathbf N^+$ we denote by $k \bmod m$ the smallest non-negative integer $r$ such that $k \equiv r \bmod m$.
For all $a,b \in \RRb \cup \{\pm\infty\}$, we take $\llb a,b\rrb:=[a,b] \cap \ZZb$; and for every $X\subseteq \mathbf{R}$ and $h, k \in \RRb$, we define $X^+ := X \cap {]0,\infty[}$ and $k\cdot X+h:=\{kx+h: x \in X\}$. 

We let $\HHb$ be either the integers or the non-negative integers, and we use $\mathcal P(\mathbf H)$ for the power set of $\mathbf H$. We regard $\HHb$ as a ``parameter''; though it makes
no big difference to stick to the assumption that $\mathbf H = \mathbf N$ most of the time, some
statements (e.g., Theorems \ref{thm:2.4} and \ref{thm:4.2}) will be sensitive to the actual choice of $\mathbf H$. Accordingly, we take an \emph{arithmetic progression} of $\mathbf H$ to be a set of the form $k \cdot \HHb + h$ with $k \in \NNb^+$ and $h \in \NNb$; and we denote by $\AAc_\mathbf{H}$ the family of all finite unions of arithmetic progressions of $\HHb$.

For each $k \in \mathbf N^+$ and $X \subseteq \bf Z$, we write $r_k(X)$ for the number of residues $h \in \llb 0, k-1 \rrb$ such that $X \cap (k \cdot \mathbf Z + h) \ne \emptyset$: Note that, if $X \subseteq \mathbf N$ then $r_k(X)$ is also equal to the number of residues $h \in \llb 0, k-1 \rrb$ such that $X \cap (k \cdot \mathbf N + h) \ne \emptyset$.

\section{Upper quasi-densities and small sets}
\label{sec:upper_densities}

We start with recalling the notion of ``density'' that will be used through this work.

\begin{definition}\label{def:upperdensity}
A function $\mu^\ast: \mathcal{P}(\HHb) \to \RRb$ is an \emph{upper density} \textup{(}on $\HHb$\textup{)} if it is
\begin{enumerate}[label={\rm (\textsc{f}\arabic{*})}]
\item\label{item:F1} normalized, i.e., $\mu^\ast(\HHb) = 1$;
\item\label{item:F2} monotone, i.e., $\mu^\star(X) \le \mu^\star(Y)$ for all $X,Y \subseteq \HHb$ with $X\subseteq Y$;
\item\label{item:F3} subadditive, i.e., $\mu^\ast(X \cup Y) \le \mu^\ast(X) + \mu^\ast(Y)$ for every $X,Y\subseteq \HHb$;
\item\label{item:F4} $(-1)$-homogeneous, i.e., $\mu^\ast(k\cdot X) = \frac{1}{k}\mu^\ast(X)$ for all $X\subseteq \HHb$ and $k \in \NNb^+$;
\item\label{item:F5} shift-invariant, i.e., $\mu^\ast(X+h) = \mu^\ast(X)$ for every $X\subseteq \HHb$ and $h \in \NNb$.
\end{enumerate}
In addition, we call $\mu^\ast$ an \emph{upper quasi-density} \textup{(}on $\HHb$\textup{)} if $\mu^\ast(X)\le 1$ for all $X\subseteq \HHb$ and $\mu^\ast$ satisfies \ref{item:F1} and \ref{item:F3}-\ref{item:F5}.
\end{definition}

Every upper density is obviously an upper quasi-density, and the existence of non-monotone upper quasi-densities is guaranteed by \cite[Theorem 1]{LT}. While it is arguable that non-monotone ``densities'' are not very interesting from the point of view of applications, it seems meaningful
to establish if certain properties of a specific class of objects depend or not on a particular
assumption (in the present case of interest, the axiom of monotonicity): This usually contributes to a better understanding of the objects under consideration and is basically our motivation for dealing with
upper quasi-densities instead of restricting attention to upper densities.

\begin{remark}\label{rem:examples}
	It turns out that each of the following set functions is an upper density in the sense of Definition \ref{def:upperdensity}:
\begin{enumerate}[label=$\bullet$]

\item the \textit{upper $\alpha$-density} (on $\mathbf H$), that is, the function
$$
\PPc(\HHb) \to \RRb: X \mapsto \limsup_{n \to \infty} \frac{\sum_{i \in X \cap \llb 1, n \rrb} i^{\alpha}}{\sum_{i \in \llb 1, n \rrb} i^{\alpha}},
$$
where $\alpha$ is a real parameter $\ge -1$ (most notably, $\alpha = -1$ corresponds to the upper logarithmic density, and $\alpha = 0$ to the upper asymptotic density);

\item the \textit{upper Banach} (or \textit{upper uniform}) \textit{density}, that is, the function
\[
\PPc(\HHb) \to \RRb: X \mapsto \lim_{n \to \infty} \max_{k \ge 0} \frac{|X \cap \llb k+1,k+n \rrb|}{n};
\]
\item the \textit{upper analytic density}, that is, the function
$$
\PPc(\HHb) \to \RRb: X \mapsto \limsup_{s \to 1^+} \frac{1}{ \zeta(s)} \sum_{i \, \in X^+} \frac{1}{i^s},
$$
where $\zeta$ is the restriction to the interval $]1,\infty[$ of the Riemann zeta function;

\item the \textit{upper P\'olya density}, that is, the function
$$
\PPc(\HHb) \to \RRb: X \mapsto \lim_{s \to 1^-} \limsup_{n \to \infty} \frac{|X \cap \llb 1,n \rrb| - |X \cap \llb 1, ns \rrb|}{(1-s)n};
$$

\item the \textit{upper Buck density}, that is, the function
$$
\mathfrak b_\mathbf{H}^\ast: \PPc(\HHb) \to \mathbf{R}: X \mapsto \inf_{A \in \AAc_\mathbf{H}\,:\, X \subseteq A} \dd_\mathbf{H}^\ast(A),
$$
where $\mathsf d_\mathbf{H}^\ast$ is the upper asymptotic density on $\bf H$ (see the first bullet on this list).
\end{enumerate}
We refer the reader to \cite{GrToTo10} for the existence of the limit in the above definition of the upper Banach density, and to \cite[Satz III, p.~559]{Po} for the existence of the limit in the above definition of the upper P\'olya density. For further details, see \cite[\S~2 and 
Examples 4--6 and 8]{LT}.
\end{remark}

As mentioned in the introduction, ``densities'' are mainly a technical device to formalize the idea that a set is, or is not, ``small''. This leads straight to the next definition.

\begin{definition}\label{def:Dsmall}
We say that a subset $X$ of $\HHb$ is \emph{small} if $\mu^\ast(X)=0$ for every upper quasi-density $\mu^\star$ on $\HHb$.
\end{definition}

Throughout, we will often use
the following characterization of small sets, which is basically a corollary of some of the main results of \cite[\S\S~4 and 6]{LT}.

\begin{theorem}\label{thm:2.4}
Let $X$ be a subset of $\bf H$. Then $X$ is small \textup{(}as per Definition \textup{\ref{def:Dsmall}}\textup{)} if and only if $\mathfrak b_{\bf H}^\ast(X) = 0$, if and only if $\mathfrak b_{\bf Z}^\ast(X) = 0$ \textup{(}see Remark \textup{\ref{rem:examples}} for the notation\textup{)}.
\end{theorem}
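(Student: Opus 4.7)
The plan is to prove three things in turn: that $X$ being small implies $\mathfrak{b}_\mathbf{H}^\ast(X)=0$; that $\mathfrak{b}_\mathbf{H}^\ast(X)=0$ conversely implies $X$ small; and that $\mathfrak{b}_\mathbf{H}^\ast(X)=0$ is equivalent to $\mathfrak{b}_\mathbf{Z}^\ast(X)=0$ for $X \subseteq \mathbf{H}$. The first of these is essentially immediate: by Remark \ref{rem:examples}, the upper Buck density $\mathfrak{b}_\mathbf{H}^\ast$ is itself an upper density, hence an upper quasi-density, and so must vanish on every small set by the very definition of smallness.

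The heart of the proof is the second implication, for which I plan to establish the stronger pointwise estimate $\mu^\ast(X) \le \mathfrak{b}_\mathbf{H}^\ast(X)$ for \emph{every} upper quasi-density $\mu^\ast$ on $\mathbf{H}$. Given such a $\mu^\ast$ and an arbitrary $A \in \AAc_\mathbf{H}$ containing $X$, I will take a common modulus $k$ (the lcm of the moduli of the constituent progressions) and rewrite $A$ as a disjoint union $\bigsqcup_{h \in R}(k \cdot \mathbf{H} + h)$ with $R \subseteq \llb 0, k-1\rrb$ and $\dd_\mathbf{H}^\ast(A) = |R|/k$. The crucial observation, and what makes monotonicity dispensable, is that any subset of the progression $k \cdot \mathbf{H} + h$ is itself of the form $k \cdot Y + h$ for a unique $Y \subseteq \mathbf{H}$; in particular, for each $h \in R$ there is $Y_h \subseteq \mathbf{H}$ with $X \cap (k \cdot \mathbf{H} + h) = k \cdot Y_h + h$, and combining \ref{item:F5}, \ref{item:F4}, and the cap $\mu^\ast(Y_h)\le 1$ then gives $\mu^\ast(X \cap (k \cdot \mathbf{H} + h)) = \tfrac{1}{k}\mu^\ast(Y_h) \le \tfrac{1}{k}$. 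Summing these $|R|$ estimates via subadditivity \ref{item:F3} yields $\mu^\ast(X) \le |R|/k = \dd_\mathbf{H}^\ast(A)$, and infimizing over admissible $A$ produces the desired inequality.

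For the final equivalence, the case $\mathbf{H} = \mathbf{Z}$ is trivial, while for $\mathbf{H} = \mathbf{N}$ I will argue that $\mathfrak{b}_\mathbf{N}^\ast$ and $\mathfrak{b}_\mathbf{Z}^\ast$ agree on subsets of $\mathbf{N}$. In one direction, each $A = \bigcup_i (k_i \cdot \mathbf{N} + h_i) \in \AAc_\mathbf{N}$ sits inside the $\mathbf{Z}$-extension $A' = \bigcup_i (k_i \cdot \mathbf{Z} + h_i) \in \AAc_\mathbf{Z}$, which satisfies $\dd_\mathbf{Z}^\ast(A') = \dd_\mathbf{N}^\ast(A)$ because the upper asymptotic density only counts positive elements; in the other direction, $B \cap \mathbf{N}$ lies in $\AAc_\mathbf{N}$ (modulo a finite set, which is irrelevant for the infimum) and has the same upper asymptotic density as any given $B \in \AAc_\mathbf{Z}$. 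Hence the two infima coincide. The principal subtlety throughout is the second paragraph: the bijective reformulation $X \cap (k \cdot \mathbf{H} + h) = k \cdot Y_h + h$ is exactly what lets one avoid any appeal to monotonicity and rely solely on the axioms actually imposed on a quasi-density.
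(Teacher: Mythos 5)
Your proof is correct, but the middle step takes a genuinely different route from the paper's. For the implication $\mathfrak{b}_\mathbf{H}^\ast(X)=0 \Rightarrow X$ small, the paper simply cites \cite[Proposition 2(vi) and Theorem 3]{LT} for the pointwise bound $\mu^\ast(X)\le\mathfrak{b}_\mathbf{H}^\ast(X)$, whereas you re-derive it from the axioms: decompose a cover $A$ into residue classes modulo a common modulus $k$, write each slice $X\cap(k\cdot\mathbf{H}+h)$ as $k\cdot Y_h+h$, apply \ref{item:F5}, \ref{item:F4}, and the cap $\mu^\ast\le 1$ to bound each slice by $1/k$, then sum via \ref{item:F3}. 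This self-contained argument (which parallels the covering construction the paper later performs in Proposition \ref{prop:2.6} via \cite[Propositions 7 and 9]{LT}) makes explicit why monotonicity is dispensable: the uniform cap $\mu^\ast\le 1$ applied to each $Y_h$ does the work that monotonicity would otherwise do. One small correction for $\mathbf{H}=\mathbf{N}$: with $k$ the lcm of the constituent moduli, $A$ need not \emph{equal} a disjoint union $\bigsqcup_{h\in R}(k\cdot\mathbf{N}+h)$ with $R\subseteq\llb 0,k-1\rrb$, since a progression $k_i\cdot\mathbf{N}+h_i$ with $h_i\ge k_i$ is only a proper tail of the corresponding residue classes modulo $k$; rather, $A$ is \emph{contained in} such a disjoint union having the same value of $\dd_\mathbf{N}^\ast$, namely $|R|/k$, which is all that your estimate actually requires. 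Your argument for the identity $\mathfrak{b}_\mathbf{N}^\ast=\mathfrak{b}_\mathbf{Z}^\ast$ on subsets of $\mathbf{N}$ is essentially the paper's own.
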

\begin{proof}
We know from \cite[Proposition 2(vi) and Theorem 3]{LT} that $0 \le \mu^\star(X) \le \mathfrak{b}^\star_{\HHb}(X)$ for every upper quasi-density $\mu^\star$ on $\HHb$. Hence, $X$ is small if and only if $\mathfrak{b}_{\bf H}^\ast(X)=0$. To complete the proof, it is therefore enough to show that $\mathfrak b^\ast_\mathbf{H}(X) = \mathfrak b^\ast_\mathbf{Z}(X)$.

To this end, note that $\mathsf d_{\bf Z}^\ast(S) = \mathsf d_{\bf Z}^\ast(S \cap \mathbf N) = \mathsf d_{\bf N}^\ast(S \cap \bf N)$ for every $S \subseteq \mathbf Z$.
On the other hand, a set $A \in \AAc_\mathbf{N}$ containing a subset $Y$ of $\bf N$ extends, in an obvious way, to a set $A^\prime \in \AAc_\mathbf{Z}$ with $Y \subseteq A^\prime$ and $\mathsf d_{\bf N}^\ast(A) = \mathsf d_{\bf Z}^\ast(A^\prime)$; and conversely, if a set $B \in \AAc_\mathbf{Z}$ contains a subset $Y$ of $\mathbf N$, then it is clear that 
\[
Y \subseteq B \cap \mathbf N,
\quad
B \cap \mathbf N \in \AAc_\mathbf{N},
\quad\text{and}\quad
\mathsf d_{\bf Z}^\ast(B) = \mathsf d_{\bf N}^\ast(B \cap \bf N). 
\]
Stitching all the pieces together, we thus conclude that
\[
\mathfrak b^\ast_\mathbf{H}(X) = \inf_{A \in \AAc_\mathbf{H}\,:\,  X \subseteq A} \mathsf d^\ast_\mathbf{H}(A) =  \inf_{B \in \AAc_\mathbf{Z}: X \subseteq B} \mathsf d^\ast_\mathbf{Z}(B) = \mathfrak b^\ast_\mathbf{Z}(X).
\] 
And this finishes the proof.
\end{proof}

As a consequence of Theorem \ref{thm:2.4}, the property of being small is independent of the choice of $\HHb$. 
In addition, since the upper Buck density on $\bf Z$ is monotone and subadditive (as is true for any upper density), we obtain:

\begin{corollary}\label{cor:idealDsmall}
	The family of small subsets of $\HHb$ is an ideal on $\HHb$. In particular, every subset of a small set is small.
\end{corollary}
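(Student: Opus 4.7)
The plan is to reduce the statement to structural properties of the upper Buck density $\mathfrak{b}^\ast_{\mathbf{H}}$ by invoking Theorem~\ref{thm:2.4}, which tells us that a subset $X \subseteq \mathbf{H}$ is small if and only if $\mathfrak{b}^\ast_{\mathbf{H}}(X) = 0$. In other words, the family $\mathcal{I}$ of small subsets of $\mathbf{H}$ coincides with the zero set of a single function, and the three defining properties of an ideal (consisting of proper subsets, closed under subsets, closed under finite unions) can each be read off from the corresponding axiom of an upper density applied to $\mathfrak{b}^\ast_{\mathbf{H}}$.

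First I would recall, as noted in Remark~\ref{rem:examples}, that $\mathfrak{b}^\ast_{\mathbf{H}}$ is itself an upper density on $\mathbf{H}$; in particular, it satisfies normalization \ref{item:F1}, monotonicity \ref{item:F2}, and subadditivity \ref{item:F3}. Closure of $\mathcal{I}$ under taking subsets is then immediate from \ref{item:F2}: if $Y \subseteq X \subseteq \mathbf{H}$ and $\mathfrak{b}^\ast_{\mathbf{H}}(X) = 0$, then $0 \le \mathfrak{b}^\ast_{\mathbf{H}}(Y) \le \mathfrak{b}^\ast_{\mathbf{H}}(X) = 0$, which at the same time settles the ``in particular'' clause of the statement. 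Closure under binary unions, and hence under finite unions by a trivial induction, follows from \ref{item:F3}: if $X$ and $Y$ are small, then $\mathfrak{b}^\ast_{\mathbf{H}}(X \cup Y) \le \mathfrak{b}^\ast_{\mathbf{H}}(X) + \mathfrak{b}^\ast_{\mathbf{H}}(Y) = 0$.

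Finally, to verify that $\mathcal{I}$ really is an ideal in the sense recalled in \S~\ref{sec:introduction}, namely that its members are \emph{proper} subsets of $\mathbf{H}$, I would invoke the normalization axiom \ref{item:F1}: since $\mathfrak{b}^\ast_{\mathbf{H}}(\mathbf{H}) = 1 \neq 0$, the set $\mathbf{H}$ itself is not small, so $\mathbf{H} \notin \mathcal{I}$. I do not foresee a genuine obstacle here: the corollary is essentially an immediate consequence of Theorem~\ref{thm:2.4} combined with the basic axioms \ref{item:F1}--\ref{item:F3} applied to the specific upper density $\mathfrak{b}^\ast_{\mathbf{H}}$, and the written proof should take no more than a few lines.
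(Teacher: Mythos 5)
Your argument is correct and coincides, essentially line for line, with the paper's own (very brief) justification: characterize small sets as the zero set of the upper Buck density via Theorem~\ref{thm:2.4}, then invoke monotonicity \ref{item:F2} for closure under subsets and subadditivity \ref{item:F3} for closure under finite unions. The only cosmetic difference is that the paper phrases the argument via $\mathfrak{b}^\ast_{\mathbf{Z}}$ rather than $\mathfrak{b}^\ast_{\mathbf{H}}$ (both are interchangeable here by Theorem~\ref{thm:2.4}), and you add the explicit observation via \ref{item:F1} that $\mathbf{H}$ itself is not small, which the paper leaves implicit.
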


Notice that Corollary \ref{cor:idealDsmall} is not obvious a priori, since it is unknown whether the zero set of an upper quasi-density on $\bf H$ is closed under taking subsets, cf.~\cite[Question 5]{LT}. 

With this said, we are going to derive an ``explicit formula'' for (a certain generalization of) the upper Buck density that 
is perhaps of independent interest in light of Theorem \ref{thm:2.4}
and the role played by the upper Buck density in the present work 
(cf.~\cite[Theorem 1]{Pas} for a weaker result along the same lines). The reader may want to review \S~\ref{sec:notation} and \cite[Example 5]{LT} before reading further.

\begin{proposition}\label{prop:2.6}
	Let $\mu^\star$ be an upper quasi-density on $\HHb$ and $\mathfrak b^\ast(\mathscr A_\mathbf{H}; \mu^\ast)$ the function
	$$
	\mathcal P(\mathbf H) \to \mathbf R: X \mapsto \inf_{A \in \mathscr A_\mathbf{H}\,:\, X \subseteq A} \mu^\ast(A).
	$$
	Moreover, let $X$ be a subset of $\HHb$ and $(k_n)_{n \ge 1}$ an increasing sequence of positive integers such that, for every $m \in \NNb^+$, $k_n$ is divisible by $m$ for all large $n \in \mathbf N^+$. Then 
	$$
	\mathfrak{b}^\ast(\mathscr A_\mathbf{H}; \mu^\ast)(X) = \mathfrak b_\mathbf{Z}^\ast(X) = \inf_{k \ge 1} \frac{r_k(X)}{k} = \lim_{n \to \infty} \frac{r_{k_n}(X)}{k_n}.
	$$
\end{proposition}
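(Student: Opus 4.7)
The plan is to derive an explicit value for $\mu^\ast$ on every $A \in \mathscr A_\mathbf H$; once this is in hand, the four equalities follow by unwinding infima and a counting argument. Fix $A \in \mathscr A_\mathbf H$. Collecting denominators, $A$ can be written as a disjoint union $\bigsqcup_{h \in R}(k \cdot \mathbf H + h)$ with $R \subseteq \llb 0,k-1\rrb$, where $k$ is (say) the lcm of the moduli in some chosen representation; indeed, each AP $k_i \cdot \mathbf H + h_i$ with $k_i \mid k$ splits into $k/k_i$ disjoint APs of common difference $k$. Axioms \ref{item:F1}, \ref{item:F4}, \ref{item:F5} yield $\mu^\ast(k\cdot\mathbf H+h) = 1/k$, so subadditivity \ref{item:F3} gives
\[
\mu^\ast(A) \le |R|/k \quad \text{and} \quad \mu^\ast(A^c) \le (k-|R|)/k.
\]
On the other hand, $\mathbf H = A \cup A^c$ together with \ref{item:F1} and \ref{item:F3} forces $1 \le \mu^\ast(A) + \mu^\ast(A^c)$, and summing the two displayed bounds shows that both must be equalities. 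In particular, $\mu^\ast(A) = |R|/k$.

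With this formula in hand, observe that for fixed $k$ the smallest $A \in \mathscr A_\mathbf H$ with $A \supseteq X$ built from APs of modulus $k$ takes $R$ to be the set of residues $h \in \llb 0,k-1\rrb$ with $X \cap (k\cdot\mathbf H + h) \ne \emptyset$, which has cardinality $r_k(X)$ by the compatibility remark in \S~\ref{sec:notation}. Consequently $\mathfrak b^\ast(\mathscr A_\mathbf H; \mu^\ast)(X) = \inf_{k \ge 1} r_k(X)/k$. Applying the same reasoning with $(\mathbf H,\mu^\ast)$ replaced by $(\mathbf Z, \mathsf d_\mathbf Z^\ast)$ gives $\mathfrak b_\mathbf Z^\ast(X) = \inf_{k\ge 1} r_k(X)/k$ as well, settling the first two equalities of the proposition.

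For the limit along $(k_n)$, I would first verify that $k \mapsto r_k(X)/k$ is monotone along divisibility: if $m \mid k$, the reduction $h \mapsto h \bmod m$ sends $\{h \in \llb 0,k-1\rrb : X \cap (k\mathbf Z+h)\ne\emptyset\}$ into $\{h' \in \llb 0,m-1\rrb : X \cap (m\mathbf Z+h')\ne\emptyset\}$ with fibres of size at most $k/m$, giving $r_k(X) \le (k/m)\, r_m(X)$ and hence $r_k(X)/k \le r_m(X)/m$. The hypothesis on $(k_n)$ then yields $\limsup_n r_{k_n}(X)/k_n \le r_m(X)/m$ for every $m$, so $\limsup_n r_{k_n}(X)/k_n \le \inf_k r_k(X)/k$; the trivial reverse inequality closes the argument. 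The main obstacle lies in the opening step: because upper quasi-densities lack monotonicity \ref{item:F2}, subadditivity alone only yields $\mu^\ast(A) \le |R|/k$, and recovering the equality demanded by the statement requires playing $A$ against $A^c$ and exploiting the normalization axiom, a move that remains available precisely because \ref{item:F1} and \ref{item:F3} are retained.
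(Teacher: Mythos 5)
Your argument is correct and reaches the stated identity, but by a route more self-contained than the paper's. The paper establishes the exact value $\mu^\ast(A)=|R|/k$ on $\mathscr A_\mathbf{H}$, together with the monotonicity of $\mu^\ast$ restricted to $\mathscr A_\mathbf{H}$, by citing Propositions 7 and 9 of \cite{LT}; you re-derive the value formula from the axioms via the ``$A$ against $A^c$'' trick, and this is indeed what replaces \ref{item:F2}: subadditivity alone gives $\mu^\ast(A)\le|R|/k$ and $\mu^\ast(\mathbf H\setminus A)\le(k-|R|)/k$, and comparing their sum with $1=\mu^\ast(\mathbf H)\le\mu^\ast(A)+\mu^\ast(\mathbf H\setminus A)$ forces both to be equalities. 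For the equality $\mathfrak b^\ast(\mathscr A_\mathbf{H};\mu^\ast)(X)=\mathfrak b_\mathbf{Z}^\ast(X)$, the paper observes that $\inf_{k\ge1}r_k(X)/k$ is independent of $\mu^\ast$, specializes to $\mu^\ast=\mathsf d_\mathbf{H}^\ast$, and invokes Theorem \ref{thm:2.4}; you instead re-run the computation for $(\mathbf Z,\mathsf d_\mathbf{Z}^\ast)$, which is more direct and sidesteps Theorem \ref{thm:2.4} entirely. The divisibility-monotonicity of $k\mapsto r_k(X)/k$ and the passage to the limit along $(k_n)$ match the paper. One step you leave too implicit and should spell out: to conclude $\mathfrak b^\ast(\mathscr A_\mathbf{H};\mu^\ast)(X)=\inf_{k\ge1}r_k(X)/k$, you need not only the cover $A_k$ achieving $r_k(X)/k$ for each $k$, but also that \emph{every} admissible cover $A\supseteq X$, written with modulus $k$ and residue set $R$, satisfies $R\supseteq\{h\in\llb 0,k-1\rrb: X\cap(k\cdot\mathbf H+h)\ne\emptyset\}$, whence $\mu^\ast(A)=|R|/k\ge r_k(X)/k\ge\inf_{k'\ge1}r_{k'}(X)/k'$; without this lower bound the infimum over $\mathscr A_\mathbf{H}$ could a priori be smaller.
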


\begin{proof}
To begin, let $A \in \mathscr A_\mathbf{H}$ and suppose $X \subseteq A$. By definition, $A = \bigcup_{h \in \mathcal H} (k \cdot \mathbf H + h)$ for some $k \in \mathbf N^+$ and $\mathcal H \subseteq \llb 0, k-1 \rrb$. Accordingly, set
$$
\mathcal H^\prime := \{h \in \mathcal H: X \cap (k \cdot \mathbf H + h) \ne \emptyset\}
\quad\text{and}\quad
\textstyle A^\prime := \bigcup_{h \in \mathcal H^\prime} (k \cdot \mathbf H + h).
$$
Then $X \subseteq A^\prime \subseteq A$ and $A^\prime \in \mathscr A_\mathbf{H}$, and it is clear that $r_k(X) = |\mathcal H^\prime|$.
Hence, we obtain from \cite[Propositions 7 and 9]{LT} that
$$
\frac{r_k(X)}{k} = \frac{|\mathcal H^\prime|}{k} = \mu^\ast(A^\prime) \le \mu^\ast(A).
$$
Conversely, let $k \in \mathbf N^+$, and take $A := \bigcup_{h \in \mathcal H} (k \cdot \mathbf H + h)$, where 
$\mathcal H$ denotes the set of all residues $h \in \llb 0, k-1 \rrb$ for which $X \cap (k \cdot \mathbf H + h)$ is non-empty. 
Then $A \in \mathscr A_\mathbf{H}$ and $X \subseteq A$, and similarly as in the previous paragraph, $r_k(X) = |\mathcal H|$ and $\mu^\ast(A) = r_k(X)/k$.

So putting it all together, we can readily conclude from the above that
\begin{equation}\label{equ:alternative-for-Buck}
\mathfrak b^\ast(\mathscr A_\mathbf{H}; \mu^\ast)(X) = \inf_{k \ge 1} \frac{r_k(X)}{k}.
\end{equation}
On the other hand, it is straightforward that $r_k(X)/k \le r_h(X)/h$ for all $h, k \in \mathbf N^+$ such that $h \mid k$. Therefore, it follows from our assumptions that, for every $k \in \mathbf N^+$, the inequality 
$
r_{k_n}(X)/k_n \le r_k(X)/k
$ holds true for all but finitely many $n \in \mathbf N^+$; and this implies by  \eqref{equ:alternative-for-Buck} that $r_{k_n}(X)/k_n \to \mathfrak b^\ast(\mathscr A_\mathbf{H}; \mu^\ast)(X)$ as $n \to \infty$. 

The proof is thus complete, because the preceding conclusions are independent from the choice of the upper quasi-density $\mu^\ast$, and letting $\mu^\ast$ be the upper asymptotic density on $\mathbf H$ yields by Theorem \ref{thm:2.4} that $\mathfrak b^\ast(\mathscr A_\mathbf{H}; \mu^\ast)(X) = \mathfrak b_\mathbf{Z}^\ast(X)$.
\end{proof}

\section{Criteria for smallness and examples}\label{sec:preliminaries}

Given an upper [quasi-]density $\mu^\ast$ on $\HHb$, we follow \cite{LT} and refer to the function
$$
\mu_\ast: \PPc(\HHb) \to \RRb: X \mapsto 1 - \mu^\ast(\mathbf H \setminus X)
$$
as the \emph{lower \textup{[}quasi-\textup{]}density} on $\HHb$ conjugate to $\mu^\ast$, or simply as the \textit{conjugate} of $\mu^\ast$. 
We list below some basic properties of upper and lower [quasi-]densities.

\begin{lemma}\label{lem:basicUD1}
Let $\mu^\star$ be an upper quasi-density on $\HHb$ with conjugate $\mu_\ast$, and let $X$ be a subset of $\bf H$. The following hold:
\begin{enumerate}[label={\rm (\roman{*})}]
\item \label{basicUD1} $\mu^\star(\mathcal P(\mathbf H))=\mu_\star(\mathcal P(\mathbf H))=[0,1]$ and  $\mu_\star(X)\le \mu^\star(X)$.
\item \label{basicUD3} If $X$ is finite, then $\mu^\star(X)=0$.
\item \label{listUD4} If $k\cdot \HHb+h\subseteq X$ for some $k \in \NNb^+$ and $h \in \NNb$, then $\mu^\ast(X) \ge \frac{1}{k}$. Symmetrically, if $X\subseteq k\cdot \HHb+h$ then $\mu^\ast(X) \le \frac{1}{k}$.
\end{enumerate}
\end{lemma}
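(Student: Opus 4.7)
The plan is to prove parts (i)--(iii) in that order, with (i) and (ii) feeding (iii).

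For (i), the containments $\mu^\star(\mathcal P(\mathbf H)), \mu_\star(\mathcal P(\mathbf H)) \subseteq [0,1]$ and the inequality $\mu_\star(X) \le \mu^\star(X)$ all follow from a single application of \ref{item:F3} to the decomposition $\mathbf H = X \cup (\mathbf H \setminus X)$, which gives $1 \le \mu^\star(X) + \mu^\star(\mathbf H \setminus X)$; combined with the quasi-density cap $\mu^\star \le 1$ this forces $\mu^\star(X) \ge 0$ and $\mu_\star(X) = 1 - \mu^\star(\mathbf H \setminus X) \le \mu^\star(X)$. The substantive part is the surjectivity onto $[0,1]$. Given $\alpha \in [0,1]$ with binary expansion $\alpha = \sum_{i \ge 1} a_i 2^{-i}$, I set
\[
B_i := 2^i \cdot \mathbf H + 2^{i-1}, \qquad X := \bigcup_{i:\, a_i = 1} B_i.
\]
The $B_i$ partition $\mathbf H \setminus \{0\}$ by $2$-adic valuation, and $\mu^\star(B_i) = 2^{-i}$ by \ref{item:F4}--\ref{item:F5}. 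Splitting $X = X^{\le N} \sqcup X^{>N}$, the head $X^{\le N}$ is a disjoint union of $m := \sum_{i \le N} a_i 2^{N-i}$ residue classes modulo $2^N$, so $\mu^\star(X^{\le N}) = m/2^N$ by applying \ref{item:F3} both to $X^{\le N}$ and to its complement within $\mathbf H$. The tail $X^{>N}$ sits inside $2^N \cdot \mathbf H$ and therefore can be written $X^{>N} = 2^N \cdot Z_N$ for some $Z_N \subseteq \mathbf H$; then \ref{item:F4} gives $\mu^\star(X^{>N}) = \mu^\star(Z_N)/2^N \le 2^{-N}$ \emph{without} invoking monotonicity. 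Subadditivity and $N \to \infty$ yield $\mu^\star(X) \le \alpha$, and the same estimate applied to $Y := \bigcup_{i: a_i = 0} B_i$ gives $\mu^\star(Y) \le 1 - \alpha$; subadditivity on $\mathbf H = X \cup Y \cup \{0\}$, together with $\mu^\star(\{0\}) = 0$ (from $2 \cdot \{0\} = \{0\}$ and \ref{item:F4}), forces $\mu^\star(X) \ge \alpha$. Surjectivity of $\mu_\star$ is then immediate by complementation.

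For (ii), finite subadditivity reduces the problem to the singleton case. The identity $2 \cdot \{0\} = \{0\}$ and \ref{item:F4} give $\mu^\star(\{0\}) = \tfrac12 \mu^\star(\{0\}) = 0$, and \ref{item:F5} propagates this to every $n \in \mathbf N$; when $\mathbf H = \mathbf Z$ and $n < 0$, applying \ref{item:F5} to $\{n\}$ with the positive shift $-n$ lands on $\{0\}$.

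For (iii), I would prove the upper bound first, since it does not use (ii). If $X \subseteq k \cdot \mathbf H + h$, each $x \in X$ is uniquely $kj + h$ with $j \in \mathbf H$, so $X = k \cdot Y + h$ for $Y := \{(x - h)/k : x \in X\} \subseteq \mathbf H$, and \ref{item:F4}--\ref{item:F5} give $\mu^\star(X) = \mu^\star(Y)/k \le 1/k$. For the lower bound, set $h' := h \bmod k$ and $F := \{h' + jk : 0 \le j < (h - h')/k\}$, a finite set (empty when $\mathbf H = \mathbf Z$ or $h < k$). The crux is the \emph{exact} set equality
\[
\mathbf H = X \cup F \cup \bigcup_{0 \le i \le k-1,\, i \ne h'} (k \cdot \mathbf H + i),
\]
which holds because $X \supseteq k \cdot \mathbf H + h$ covers the residue class $h' \pmod k$ except for the finite remainder $F$. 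Subadditivity, combined with $\mu^\star(F) = 0$ from (ii) and $\mu^\star(k \cdot \mathbf H + i) = 1/k$ for each $i$, yields $1 \le \mu^\star(X) + (k-1)/k$, whence $\mu^\star(X) \ge 1/k$.

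The principal obstacle is the surjectivity claim in (i): without monotonicity one cannot bound $\mu^\star$ of a set by $\mu^\star$ of a larger one, the usual weapon for such constructions. The binary-expansion argument circumvents this by arranging that the tail $X^{>N}$ is literally a dilation $2^N \cdot Z_N$, so that \ref{item:F4} alone delivers the required $\le 2^{-N}$ decay. Analogous vigilance is needed in (iii), where one must ensure set \emph{equality} (not merely inclusion) in the covering identity, so that finite subadditivity suffices on its own.
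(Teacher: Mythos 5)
Your proof is correct, and all three parts hold up under scrutiny. The paper gives no argument of its own — it simply cites \cite[Theorem 2, Propositions 2(vi) and 6, and Corollary 2]{LT} — so there is nothing to compare it against in the present text; what you have done is supply a self-contained argument where the paper defers. The obstacle you flag, the absence of monotonicity, is exactly the point of working with upper \emph{quasi-}densities, and your workarounds are the correct ones. In (i), the tail $X^{>N}$ is controlled not by enclosure in $2^N \cdot \mathbf H$ (which would need \ref{item:F2}) but by the exact identity $X^{>N} = 2^N \cdot Z_N$ together with \ref{item:F4} and the cap $\mu^\ast \le 1$; and the exact value $\mu^\ast(X^{\le N}) = m/2^N$ is forced by applying \ref{item:F3} to $X^{\le N}$ and to its complementary union of residue classes mod $2^N$, squeezing against $\mu^\ast(\mathbf H) = 1$. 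In (iii), both bounds are obtained without any recourse to inclusion: the upper bound via the exact representation $X = k \cdot Y + h$ and \ref{item:F4}--\ref{item:F5}, the lower bound via an exact covering of $\mathbf H$ (automatic here, since the right-hand side is a subset of $\mathbf H$) and part (ii). One minor slip: you assert $F$ is ``empty when $\mathbf H = \mathbf Z$,'' but $F$ as you define it does not depend on $\mathbf H$ and has $(h - h')/k$ elements whenever $h \ge k$; the correct observation is that when $\mathbf H = \mathbf Z$ one has $k \cdot \mathbf Z + h = k \cdot \mathbf Z + h'$, so $F \subseteq X$ and is redundant in the covering. Since $F$ is finite and $\mu^\ast(F) = 0$ by (ii) in every case, this does not affect the conclusion.
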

\begin{proof}
See \cite[Theorem 2, Propositions 2(vi) and 6, and Corollary 2]{LT}.
\end{proof}

The next result extends a criterion used in \cite[\S~3, p.\ 563]{Bu0} to demonstrate that the upper Buck density of the set of squares is zero; see, in particular, the corollary to \cite[Theorem 3]{Bu0}.

\begin{proposition}
\label{th:modular_criterion}
Let $\mu^\ast$ be an upper density on $\HHb$ with conjugate $\mu_\ast$, and for every $k \in \NNb^+$ and $S \subseteq \HHb$ denote by $w_k(S)$ the cardinality of the set
$$
\mathcal W_k(S) := \{h \in \llb 0, k-1 \rrb: \mu^\ast(S \cap (k \cdot \HHb + h)) \ne 0\}.
$$
Next, let $X$ be a subset of $\HHb$, and let $(k_n)_{n \ge 1}$ be a sequence of pairwise coprime positive integers. The following hold:
\begin{enumerate}[label={\rm (\roman{*})}]
\item\label{item:th:modular_criterion_2} $w_k(X) \le r_k(X)$ for all $k \in \mathbf N^+$, and for every $n \in \mathbf N^+$ we have
\begin{equation}\label{equ:equ_1}
\prod_{i=1}^n \left(1 - \frac{w_{k_i}(\mathbf H \setminus X)}{k_i}\right)\! \le \mu_\ast(X) \le \mu^\ast(X) \le \prod_{i=1}^n \frac{w_{k_i}(X)}{k_i}.
\end{equation}
\item\label{item:th:modular_criterion_3} If $\sum_{n = 1}^\infty (1 - w_{k_n}(X)/k_n) = \infty$, then $\mu^\ast(X) = 0$. In particular, if $\sum_{n = 1}^\infty k_n^{-1} = \infty$ and $w_{k_n}(X) \le k_n - 1$ for all $n \in \NNb^+$, then $\mu^\ast(X) = 0$.

\item\label{item:th:modular_criterion_4} If $\sum_{n = 1}^\infty k_n^{-1} = \infty$ and $r_{k_n}(X) \le k_n-1$ for all $n \in \mathbf N^+$, or more generally if $\sum_{n = 1}^\infty (1 - r_{k_n}(X)/k_n) = \infty$, then $X$ is small.
\end{enumerate}
\end{proposition}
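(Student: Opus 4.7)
The plan is to prove the three assertions in order, with (ii) and (iii) falling out as corollaries of (i).

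For (i), the bound $w_k(X) \le r_k(X)$ is immediate from Lemma \ref{lem:basicUD1}(ii), which gives $\mu^\ast(\emptyset) = 0$, so every residue in $\mathcal W_k(X)$ also satisfies $X \cap (k \cdot \HHb + h) \ne \emptyset$. For the upper bound on $\mu^\ast(X)$, I first observe the single-modulus estimate $\mu^\ast(X) \le w_k(X)/k$ for each $k \in \NNb^+$: split $X$ along residues modulo $k$, discard the classes where $\mu^\ast(X \cap (k \cdot \HHb + h)) = 0$, and bound each of the $w_k(X)$ surviving classes by $1/k$ via Lemma \ref{lem:basicUD1}(iii) and subadditivity. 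Applying this to $K := k_1 \cdots k_n$ gives $\mu^\ast(X) \le w_K(X)/K$, and the CRT-type inequality $w_K(X) \le \prod_{i=1}^n w_{k_i}(X)$ then completes the proof: indeed, the inclusion $K \cdot \HHb + r \subseteq k_i \cdot \HHb + (r \bmod k_i)$ together with monotonicity forces every $r \in \mathcal W_K(X)$ to have $r \bmod k_i \in \mathcal W_{k_i}(X)$ for each $i$, and pairwise coprimality makes the assignment $r \mapsto (r \bmod k_i)_{i=1}^n$ injective.

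The lower bound on $\mu_\ast(X) = 1 - \mu^\ast(\HHb \setminus X)$ is the more delicate half. Setting $Y := \HHb \setminus X$ and $U_i := \bigcup_{h \notin \mathcal W_{k_i}(Y)} (k_i \cdot \HHb + h)$, subadditivity gives $\mu^\ast(Y \cap U_i) = 0$ for each $i$. Pairwise coprimality of the $k_i$ combined with CRT shows that $U := U_1 \cap \cdots \cap U_n$ belongs to $\mathscr A_\HHb$ and consists of exactly $\prod_{i=1}^n (k_i - w_{k_i}(Y))$ residues modulo $K$; the fact that upper densities are fully determined on $\mathscr A_\HHb$ (implicit in Proposition \ref{prop:2.6}) then yields $\mu^\ast(U) = \prod_{i=1}^n (1 - w_{k_i}(Y)/k_i)$ and $\mu^\ast(\HHb \setminus U) = 1 - \mu^\ast(U)$. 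Since $Y \cap U \subseteq Y \cap U_1$ has $\mu^\ast = 0$, subadditivity and monotonicity give
\[
\mu^\ast(Y) \le \mu^\ast(Y \cap U) + \mu^\ast(\HHb \setminus U) = 1 - \prod_{i=1}^n \left(1 - \frac{w_{k_i}(Y)}{k_i}\right),
\]
which rearranges to the desired lower bound.

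Part (ii) follows by letting $n \to \infty$ in the upper bound of (i) and invoking the elementary fact that $\sum (1 - c_n) = \infty$ forces $\prod c_n = 0$ for $c_n \in [0,1]$ (via $\log c_n \le -(1-c_n)$); the ``in particular'' clause is immediate because $w_{k_n}(X) \le k_n - 1$ gives $1 - w_{k_n}(X)/k_n \ge 1/k_n$. Finally, (iii) follows by applying (ii) to the upper Buck density $\mathfrak b^\ast_\HHb$ (an upper density by Remark \ref{rem:examples}): the inequality $w_{k_n}(X) \le r_{k_n}(X)$ from (i), with $w_{k_n}$ computed relative to $\mathfrak b^\ast_\HHb$, converts $\sum (1 - r_{k_n}(X)/k_n) = \infty$ into $\sum (1 - w_{k_n}(X)/k_n) = \infty$, so (ii) yields $\mathfrak b^\ast_\HHb(X) = 0$ and Theorem \ref{thm:2.4} concludes that $X$ is small. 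The step I expect to be the main obstacle is the CRT-based evaluation of $\mu^\ast(U)$ in paragraph two: one must verify carefully that the ``good'' residue tuples combine into exactly $\prod(k_i - w_{k_i}(Y))$ distinct residues modulo $K$, which is precisely where pairwise coprimality of the $k_i$ enters in an essential way.
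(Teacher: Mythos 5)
Your proof is correct, and most of it tracks the paper closely: the bound $w_k(X) \le r_k(X)$ via $\mu^\ast(\emptyset)=0$, the CRT-based injection giving $w_K(X) \le \prod_i w_{k_i}(X)$ for $K = k_1\cdots k_n$ (with monotonicity entering exactly where you flag it), the single-modulus estimate $\mu^\ast(X) \le w_k(X)/k$, the $\log x \le -(1-x)$ argument for (ii), and the reduction of (iii) to the upper Buck density via Theorem \ref{thm:2.4}. Where you genuinely diverge is the lower bound in \eqref{equ:equ_1}. The paper proves it symmetrically with the upper bound: it invokes \cite[Proposition 11]{LT} to get $\mu_\ast(X) \ge 1 - w_K(\HHb\setminus X)/K$, applies the same submultiplicativity $w_K \le \prod_i w_{k_i}$ to $\HHb \setminus X$, and then uses the elementary inequality $1 - a_1\cdots a_n \ge (1-a_1)\cdots(1-a_n)$ to reach the stated product form. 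You instead construct the set $U \in \AAc_\HHb$ of ``good'' residues mod $K$ explicitly, use the determination of $\mu^\ast$ on $\AAc_\HHb$ (from \cite[Propositions 7 and 9]{LT}, as used in Proposition \ref{prop:2.6}) to pin down $\mu^\ast(U)$ and $\mu^\ast(\HHb\setminus U)$, and then decompose $Y := \HHb\setminus X$ as $(Y\cap U)\cup(Y\setminus U)$ with the first piece null by subadditivity and the second controlled by monotonicity. Both routes are valid; yours is more set-theoretic and lands directly on the stated bound, while the paper's passes through the sharper intermediate estimate $\mu_\ast(X) \ge 1 - \prod_i w_{k_i}(\HHb\setminus X)/k_i$ before weakening it, at the cost of one extra elementary inequality. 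Your use of the determination on $\AAc_\HHb$ in place of that inequality is a reasonable trade; just be aware that the equality $\mu^\ast(\HHb\setminus U) = 1-\mu^\ast(U)$ is not a general complementation fact for upper densities but holds here precisely because both $U$ and $\HHb\setminus U$ lie in $\AAc_\HHb$, which is worth stating explicitly.
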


\begin{proof}

\ref{item:th:modular_criterion_2} The first inequality is obvious. For the other, notice that the function $\NNb^+ \to \NNb: q \mapsto w_q(X)$ is submultiplicative, that is, $w_{mn}(X) \le w_{m}(X) w_{n}(X)$ for all $m, n \in \NNb^+$ with $\gcd(m,n) = 1$: This is so because, for all $m, n \in \mathbf N^+$, the function
$$
\mathcal{W}_{mn}(X) \to \mathcal{W}_{m}(X) \times \mathcal{W}_{n}(X): h \mapsto (h \bmod m, h \bmod n)
$$
is well defined (here is where we use that $\mu^\ast$ is monotone); and if, in addition, $m$ and $n$ are coprime, then the function is injective by the Chinese remainder theorem. 
Consequently, we get from \cite[Proposition 11]{LT} that, 
for every $n \in \NNb^+$,
$$
\mu^\ast(X) \le \frac{w_{k_1 \cdots k_n}(X)}{k_1 \cdots k_n} \le \prod_{i=1}^n \frac{w_{k_i}(X)}{k_i}
$$
and
\begin{equation*}
\begin{split}
\mu_\ast(X) & \ge 1 - \frac{w_{k_1 \cdots k_n} (\mathbf H \setminus X)}{k_1 \cdots k_n} \ge 1 - \prod_{i=1}^n \frac{w_{k_i}(\mathbf H \setminus X)}{k_i} \ge \prod_{i=1}^n \left(1 - \frac{w_{k_i}(\mathbf H \setminus X)}{k_i}\right),
\end{split}
\end{equation*}
where we have used that $1 - a_1 \cdots a_n \ge 1 - a_1 \ge (1-a_1) \cdots (1 - a_n)$ for all $a_1, \ldots, a_n \in [0,1]$. By Lemma \ref{lem:basicUD1}\ref{basicUD1}, this suffices to finish the proof.

\ref{item:th:modular_criterion_3} If $w_{k_n}(X)=0$ for some $n \in \NNb^+$, the claim follows at once from \eqref{equ:equ_1}. Otherwise, $1 \le w_{k_n}(X) \le k_n$ for all $n \in \NNb^+$. So, recalling that 
$$
\log x \le -(1-x),
\quad\text{for } x \in {]0,1]},
$$
we find that, for every $n \in \mathbf N^+$,
\begin{equation}
\label{equ:4}
\prod_{i=1}^n \frac{w_{k_i}(X)}{k_i} = \exp\left(\sum_{i=1}^n \log \frac{w_{k_i}(X)}{k_i}\right)\! \le \exp\left(-\sum_{i=1}^n \frac{1-w_{k_i}(X)}{k_i}\right).
\end{equation}
But the right-most side of \eqref{equ:4} tends to $0$ as $n \to \infty$, since we are assuming $\sum_{n = 1}^\infty (1- w_{k_n}(X)/k_n) = \infty$; and this, together with part \ref{item:th:modular_criterion_2}, implies $\mu^\ast(X) = 0$.

The rest is straightforward, because if $w_{k_n}(X) \le k_n - 1$ for every $n \in \NNb^+$, then it is clear that $\sum_{n = 1}^\infty (1 - w_{k_n}(X)/k_n) \ge \sum_{n = 1}^\infty k_n^{-1} = \infty$.

\ref{item:th:modular_criterion_4} This is immediate from parts \ref{item:th:modular_criterion_2} and \ref{item:th:modular_criterion_3} and the arbitrariness of $\mu^\ast$.
\end{proof}
We continue with a common generalization of \cite[Corollary 2]{Niv} and \cite[Lemma 2]{BF}.

\begin{proposition}
\label{prop:3.3}
Let $X \subseteq \HHb$, and let $\mu^\ast$ be an upper density on $\mathbf H$.
Moreover, assume that $(k_n)_{n \ge 1}$ is a sequence 
of pairwise coprime positive integers such that $\sum_{n = 1}^\infty k_n^{-1} = \infty$, 
and for each $n \in \NNb^+$ set $X_n := \{x \in X: k_n \mid x\}$. 
If there are only finitely many $n \in \NNb^+$ for which $\mu^\ast(X_n) > 0$, then $\mu^\ast(X) = 0$; in particular, the set $\{k_n: n \in \NNb^+\}$ is small.
\end{proposition}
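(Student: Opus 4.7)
My plan is to reduce the first assertion directly to the modular criterion of Proposition \ref{th:modular_criterion}(iii). The crucial observation is that $X_n = X \cap (k_n \cdot \mathbf{H} + 0)$, i.e., $X_n$ is the intersection of $X$ with the zero residue class modulo $k_n$. Hence the hypothesis $\mu^\ast(X_n) = 0$ is equivalent to saying that $0 \notin \mathcal{W}_{k_n}(X)$, which in turn gives $w_{k_n}(X) \le k_n - 1$.

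First I would discard the finitely many exceptional indices $n$ for which $\mu^\ast(X_n) > 0$. The remaining subsequence is still pairwise coprime and its reciprocal sum still diverges (removing finitely many terms cannot affect this), and it now satisfies $w_{k_n}(X) \le k_n - 1$ for every index. Proposition \ref{th:modular_criterion}(iii), applied to this subsequence and the given upper density $\mu^\ast$, then yields $\mu^\ast(X) = 0$.

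For the ``in particular'' claim I would set $X := \{k_n : n \in \mathbf{N}^+\}$ and take $\mu^\ast$ to be the upper Buck density $\mathfrak{b}_\mathbf{H}^\ast$, which is an upper density on $\mathbf{H}$ by Remark \ref{rem:examples}. Pairwise coprimality forces $k_n \nmid k_m$ whenever $m \ne n$ and $k_n \ge 2$, so in that case $X_n = \{k_n\}$ is finite and $\mu^\ast(X_n) = 0$ by Lemma \ref{lem:basicUD1}(ii); at most one of the $k_n$ can equal $1$ under the tacit convention that the terms of the sequence are distinct. The first part of the proposition then gives $\mathfrak{b}_\mathbf{H}^\ast(X) = 0$, whence the smallness of $X$ follows at once from Theorem \ref{thm:2.4}.

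I do not anticipate any real obstacle here: the statement is essentially a one-line consequence of Proposition \ref{th:modular_criterion} once one recognises $X_n$ as $X \cap (k_n \cdot \mathbf{H} + 0)$. The only routine points to verify are that the subsequence obtained after discarding the exceptional indices retains both pairwise coprimality and a divergent reciprocal sum, and that the ``$k_n = 1$'' corner case in the second assertion is disposed of correctly.
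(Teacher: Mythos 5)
Your argument is correct and follows essentially the same route as the paper's: identify $X_n$ with $X \cap (k_n \cdot \mathbf{H})$ so that $\mu^\ast(X_n)=0$ forces $w_{k_n}(X) \le k_n - 1$, discard the finitely many bad indices, and invoke Proposition~\ref{th:modular_criterion}\ref{item:th:modular_criterion_3}; for the ``in particular'' part, specialize to $X=\{k_n : n\in\NNb^+\}$ with $\mu^\ast$ the upper Buck density and use Lemma~\ref{lem:basicUD1}\ref{basicUD3}. Your extra care with the $k_n=1$ corner case (the paper simply asserts $X_n=\{k_n\}$, which is literally false when $k_n=1$) is a small but genuine improvement in rigor, not a change of method.
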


\begin{proof}	
By hypothesis, the series $\sum_{n = 1}^\infty k_n^{-1}$ diverges to $\infty$ and the set 
	\[
	I := \{0\} \cup \{n \in \NNb^+: \mu^\ast(X_n) > 0\}
	\]
	is finite (and non-empty). Hence, $\sum_{n = n_0}^\infty k_n^{-1} = \infty$ and $w_{k_n}(X) \le k_n - 1$ for all $n \ge n_0$, where  $n_0 := 1 + \max I$ and $w_{k_n}(X)$ is the number of residues $h \in \llb 0, k_n - 1 \rrb$ such that $\mu^\ast(X \cap (k \cdot \mathbf H + h)) \ne 0$. By Proposition \ref{th:modular_criterion}\ref{item:th:modular_criterion_3}, it follows that $\mu^\ast(X) = 0$. 

In particular, if $X$ is the set $\{k_n: n \in \NNb^+\}$, then it is clear that $X_n = \{k_n\}$ for all $n \in \NNb^+$, and we conclude from
Lemma \ref{lem:basicUD1}\ref{basicUD3} 
and the above that $\mu^\ast(X) = 0$, which implies that $X$ is small since we may take $\mu^\ast$ to be the upper Buck density on $\bf H$.
\end{proof}

We will repeatedly resort to Propositions \ref{th:modular_criterion} and \ref{prop:3.3} to show that various sets of integers are small. We begin with a generalization of \cite[Theorems 1--3]{BF} and \cite[Theorem 1]{Bu0}; then we proceed to prove a result on the ``density'' of a factorial-like sequence (cf.~Remark \ref{rem:sparsity-does-not-imply-small}).

\begin{corollary}
\label{cor:upper_density_of_primes}
For every $k \in \mathbf N$, the 
set $X^{(k)}$ \textup{(}resp., $Y^{(k)}$\textup{)} of all integers $x \in \HHb$ that factor into a product of exactly $k$ \textup{(}resp., at most $k$\textup{)} primes \textup{(}counted with multiplicity\textup{)}, is small.
\end{corollary}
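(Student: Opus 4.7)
The plan is to proceed by induction on $k$, applying Proposition \ref{prop:3.3} at each step with $\mu^\ast = \mathfrak{b}_\mathbf{H}^\ast$ (the upper Buck density on $\mathbf{H}$, which is an upper density by Remark \ref{rem:examples}) and $(k_n)_{n \ge 1}$ the increasing enumeration of the positive rational primes, a pairwise coprime sequence whose reciprocals diverge by Mertens' theorem. Since $Y^{(k)} = \bigcup_{j=0}^{k} X^{(j)}$ is a finite union, Corollary \ref{cor:idealDsmall} (the ideal property of small sets) reduces the task to showing that each $X^{(k)}$ is small.

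For the base case $k = 0$, the set $X^{(0)}$ consists of the units of $\mathbf{H}$, namely $\{1\}$ if $\mathbf{H} = \mathbf{N}$ or $\{\pm 1\}$ if $\mathbf{H} = \mathbf{Z}$; this set is finite, hence small by Lemma \ref{lem:basicUD1}\ref{basicUD3} applied to $\mathfrak{b}_\mathbf{H}^\ast$ combined with Theorem \ref{thm:2.4}. For the inductive step, assuming $X^{(k-1)}$ is small, the heart of the argument is the identity
$$
X^{(k)} \cap (p_n \cdot \mathbf{H}) \;=\; p_n \cdot X^{(k-1)},
$$
which follows from unique factorization in $\mathbf{Z}$: any element of $X^{(k)}$ divisible by $p_n$ must have $p_n$ (up to sign when $\mathbf{H} = \mathbf{Z}$) among its $k$ prime factors, so quotienting by $p_n$ yields an element of $X^{(k-1)}$, and the reverse inclusion is analogous. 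Axiom \ref{item:F4} then gives
$$
\mathfrak{b}_\mathbf{H}^\ast\bigl(X^{(k)} \cap (p_n \cdot \mathbf{H})\bigr) \;=\; \frac{1}{p_n}\,\mathfrak{b}_\mathbf{H}^\ast(X^{(k-1)}) \;=\; 0,
$$
where the second equality invokes the inductive hypothesis together with Theorem \ref{thm:2.4}. Since this holds for every $n$, the set of indices $n$ for which $\mathfrak{b}_\mathbf{H}^\ast(\{x \in X^{(k)} : p_n \mid x\}) > 0$ is empty, hence finite, and Proposition \ref{prop:3.3} yields $\mathfrak{b}_\mathbf{H}^\ast(X^{(k)}) = 0$; a final appeal to Theorem \ref{thm:2.4} promotes this to the smallness of $X^{(k)}$.

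I do not anticipate serious obstacles. The only minor points requiring care are (i) tracking signs when $\mathbf{H} = \mathbf{Z}$, since $\mathbf{P} \subseteq \mathbf{Z}$ includes negative primes, and (ii) interpreting the empty-product convention at the base case $k = 0$; both are easily handled.
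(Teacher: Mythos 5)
Your proof is correct and follows essentially the same route as the paper's: induction on $k$, the key identity $\{x \in X^{(k)} : p \mid x\} = p \cdot X^{(k-1)}$, axiom \ref{item:F4} combined with the inductive hypothesis, and Proposition \ref{prop:3.3} with $k_n$ the $n$-th prime, together with the divergence of $\sum_p 1/p$. The only (harmless) cosmetic difference is that you explicitly specialize $\mu^\ast$ to the upper Buck density $\mathfrak b_\mathbf{H}^\ast$ and invoke Theorem \ref{thm:2.4} to pass between ``Buck density zero'' and ``small'', whereas the paper leaves $\mu^\ast$ generic and lets the reader supply that last step.
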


\begin{proof}
Fix $k \in \mathbf N$. By subadditivity, it suffices to show that $X^{(k)}$ is small, since we have that $Y^{(k)} = X^{(0)} \cup \cdots \cup X^{(k)}$.
We argue by induction on $k$. 

In light of Lemma \ref{lem:basicUD1}\ref{basicUD3}, the claim is trivial if $k = 0$, because $X^{(0)} = \mathbf{H} \cap \{\pm 1\}$. 
Accordingly, assume that $k$ is a positive integer and $X^{(k - 1)}$ is small, and let $p \in \mathbf P^+$. Then it is clear that
$$
X_p^{(k)} := \{x \in X^{(k)}: p \mid x\} = p \cdot X^{(k-1)},
$$
and this implies by \ref{item:F4} and the inductive hypothesis that $\mu^\ast(X_p^{(k)}) = 0$ for every upper quasi-density $\mu^\ast$ on $\HHb$. Therefore, we can conclude from Proposition \ref{prop:3.3}, applied with $k_n$ equal to the $n$-th prime of $\mathbf N^+$, that also $X^{(k)}$ is small, since it is well known (see, e.g., \cite[Theorem 1.13]{Apo}) that $\sum_{p \, \in \, \PPb^+} \frac{1}{p} = \infty$.
\end{proof}

\begin{corollary}
\label{prop:density_of_factorial_sequences}
Let $(x_n)_{n \ge 1}$ be a sequence in $\HHb$ with the property that $x_n$ divides $x_{n+1}$ for each $n$. Then the set $X := \{x_n: n \in \NNb^+\}$ is small.
\end{corollary}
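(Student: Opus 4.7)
The plan is to combine Theorem~\ref{thm:2.4} with Proposition~\ref{prop:2.6} to reduce the claim to proving $\inf_{k \ge 1} r_k(X)/k = 0$, and then to exhibit good moduli of the form $k = |x_N|$ as $N \to \infty$. Two preliminary reductions are in order. If $x_n = 0$ for some $n$, then $0 = x_n \mid x_{n+1}$ forces $x_m = 0$ for every $m \ge n$, so $X$ is finite; and a finite subset of $\HHb$ is small by Lemma~\ref{lem:basicUD1}\ref{basicUD3}. Otherwise, extracting the subsequence indexed by the first occurrence of each new value preserves the divisibility condition (by transitivity of $\mid$) and realises the same set $X$, so I may further assume the $x_n$ are pairwise distinct and nonzero. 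Together with $x_n \mid x_{n+1}$ this yields $|x_n| < |x_{n+1}|$, hence $|x_{n+1}| \ge 2|x_n|$, and by iteration $|x_n| \ge 2^{n-1}|x_1|$ for every $n \in \NNb^+$.

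The core observation is that, for every $N \in \NNb^+$, setting $k := |x_N|$, one has $r_k(X) \le N$. Indeed, for $n \le N$ the chain of divisibilities gives $x_n \mid x_N$, so the $N$ integers $x_1, \ldots, x_N$ occupy at most $N$ residues modulo $k$; while for $n \ge N$ the divisibility $x_N \mid x_n$ forces $x_n \equiv 0 \pmod{k}$, a residue already taken by $x_N$ itself and hence counted once for all. Consequently,
$$
\frac{r_{|x_N|}(X)}{|x_N|} \;\le\; \frac{N}{2^{N-1}|x_1|} \;\longrightarrow\; 0 \qquad \text{as } N \to \infty,
$$
which through Proposition~\ref{prop:2.6} yields $\mathfrak{b}_{\mathbf{Z}}^\ast(X) = 0$, and then Theorem~\ref{thm:2.4} delivers the conclusion.

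The proof is essentially a residue-class counting argument and no step is particularly delicate; the one idea worth flagging is the splitting of the sequence around index $N$ when one reduces modulo $|x_N|$, which turns the geometric growth of $|x_N|$ against the merely linear bound $r_{|x_N|}(X) \le N$ on the number of occupied residues. I would expect the subtlest part of the write-up to be the opening reduction to the case of a strictly increasing (in absolute value), nonzero, and pairwise distinct sequence; once that is in place, the rest is routine.
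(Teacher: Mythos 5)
Your strategy --- reduce to a sequence of pairwise distinct nonzero terms, bound $r_{|x_N|}(X) \le N$ via the divisibility chain, and feed the result into Proposition~\ref{prop:2.6} and Theorem~\ref{thm:2.4} --- is essentially the paper's argument, and the residue-counting observation $r_{|x_N|}(X) \le N$ is exactly the paper's key step. There is, however, a real gap in the reduction: pairwise distinctness together with $x_n \mid x_{n+1}$ does \emph{not} force $|x_n| < |x_{n+1}|$ when $\HHb = \ZZb$, because a nonzero term can be followed by its negative (distinct, divisible, equal in absolute value). The sequence $1, -1, 2, -2, 4, -4, \ldots$ satisfies every reduction you invoke, yet $|x_{2n-1}| = |x_{2n}| = 2^{n-1}$, so your chain $|x_{n+1}| \ge 2|x_n|$ and the conclusion $|x_N| \ge 2^{N-1}|x_1|$ fail as stated. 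The fix is precisely what the paper does: after reducing to distinct nonzero terms one only gets $|x_n| \le |x_{n+1}|$, but a sign flip cannot recur at two consecutive steps without violating distinctness, so $|x_{2n}| < |x_{2n+2}|$ and hence $|x_{2n}| \ge 2^{n-1}$. Combined with $r_{|x_{2n}|}(X) \le 2n$ this gives $r_{|x_{2n}|}(X)/|x_{2n}| \le 2n/2^{n-1} \to 0$, and the rest of your argument goes through unchanged. The gap is invisible when $\HHb = \NNb$, where your strict-inequality claim is correct; only the $\ZZb$ case needs the passage to even indices.
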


\begin{proof}
By Lemma \ref{lem:basicUD1}\ref{basicUD3}, we can assume without loss of generality that the sequence $(x_n)_{n\ge 1}$ consists of pairwise distinct elements and $|x_n| \le |x_{n+1}|$ for all $n \in \mathbf N^+$. In particular, this ensures that $x_1 \neq 0$ and $|x_{2n}|<|x_{2n+2}|$ for every $n$.
  
Then $r_{|x_{2n}|}(X) \le 2n$ for every $n$, since $x_h \mid x_k$ for all $h, k \in \mathbf N^+$ such that $h \mid k$. On the other hand, it is easy to verify (by induction) that $|x_{2n}| \ge 2^{n-1}$ for all $n$. So, we obtain from Proposition \ref{th:modular_criterion}\ref{item:th:modular_criterion_2} that
\begin{equation*}
\mu^\ast(X) \le \inf_{n \ge 1} \frac{r_{|x_{2n}|}(X)}{|x_{2n}|} \le \liminf_{n \to \infty} \frac{2n}{2^{n-1}} = 0,
\end{equation*}
for every upper quasi-density $\mu^\ast$ on $\bf H$. In other terms, $X$ is small.
\end{proof}

In particular, it follows from Corollaries \ref{cor:idealDsmall} and \ref{prop:density_of_factorial_sequences} that a set $X \subseteq \HHb$, whose elements are factorials, primorials, or numbers of the form $a^k$ for some fixed base $a \in \HHb$, is small. This is further strengthened by the next result, which is also a generalization of the unnumbered corollary after Theorem 3 in \cite[p. 565]{Bu0}.

\begin{corollary}
\label{cor:upper_density_of_perfect_powers}
The set $X := \bigcup_{n \ge 2} \{a^n: a \in \HHb\}$ is small.
\end{corollary}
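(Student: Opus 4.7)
I would appeal to Theorem \ref{thm:2.4} combined with Proposition \ref{prop:2.6}, which reduce the claim to showing that $\inf_{k \ge 1} r_k(X)/k = 0$. A word of warning about what does \emph{not} work: the criterion from Proposition \ref{th:modular_criterion}\ref{item:th:modular_criterion_4} applied with $k_n$ prime is hopeless here, because for every prime $p$ and every residue $r \in \llb 0, p-1 \rrb$ one has $r \equiv r^p \bmod p$ by Fermat's little theorem, whence every residue class modulo $p$ meets $X$ and $r_p(X) = p$. The idea is therefore to pass from $p$ to $p^2$ and exploit $p$-adic valuation.

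The key local observation is that, for each prime $p$, the $p - 1$ residues $pu \bmod p^2$ with $u \in \llb 1, p-1 \rrb$ are not hit by any perfect power. Indeed, suppose $a^n \equiv pu \bmod p^2$ for some $n \ge 2$: if $p \nmid a$, then $a^n$ is a unit modulo $p^2$, contradicting $p \mid pu$; and if $p \mid a$, then $p^n \mid a^n$, so $a^n \equiv 0 \bmod p^2$ (using $n \ge 2$), contradicting $pu \not\equiv 0 \bmod p^2$. This yields $r_{p^2}(X) \le p^2 - (p-1)$ for every prime $p$.

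Then I would combine these local bounds via the Chinese remainder theorem: for distinct primes $p_1, \ldots, p_s$ and $k := (p_1 \cdots p_s)^2$, a residue modulo $k$ is hit by $X$ only if each of its projections modulo $p_i^2$ is hit by $X$, so $r_k(X) \le \prod_{i=1}^s r_{p_i^2}(X)$ and hence
\[
\frac{r_k(X)}{k} \le \prod_{i=1}^s \left(1 - \frac{p_i - 1}{p_i^2}\right).
\]
Choosing $p_i$ to be the $i$-th prime, the elementary bound $(p-1)/p^2 \ge 1/(2p)$ together with the divergence of $\sum_p 1/p$ forces the right-hand side to tend to $0$ as $s \to \infty$. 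Thus $\mathfrak{b}^\ast_{\ZZb}(X) = 0$, and $X$ is small by Theorem \ref{thm:2.4}.

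The main obstacle, as already flagged, is precisely the step from $p$ to $p^2$: modulo any single prime the perfect powers cover everything, so the usual ``knock out a residue class per prime'' strategy fails; what saves the day is that the residues of $p$-adic valuation exactly one are structurally forbidden modulo $p^2$, and this deficiency accumulates enough (summing $1/p$) to drive the infinite product to zero.
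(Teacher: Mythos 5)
Your proof is correct and rests on exactly the same key observation as the paper's: for a perfect power $x$ and a prime $p$, $p \mid x$ forces $p^2 \mid x$, so the residues of $p$-adic valuation exactly one are missed modulo $p^2$, giving $r_{p^2}(X) \le p^2 - p + 1$. The only cosmetic difference is that you carry out the Chinese-remainder combination and the divergence-to-zero estimate by hand, whereas the paper simply feeds the bound $\sum_p \bigl(1 - r_{p^2}(X)/p^2\bigr) \ge \tfrac{1}{2}\sum_p 1/p = \infty$ into Proposition~\ref{th:modular_criterion}\ref{item:th:modular_criterion_4}, which packages exactly that argument.
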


\begin{proof}
Let $p \in \mathbf P^+$ and pick an element $x \in X$. It is clear that $p \mid x$ if and only if $p^2 \mid x$.
Hence, $r_{p^2}(X) \le p^2 - p + 1 \le p^2 - \frac{1}{2}p$. It follows (cf.~Corollary \ref{cor:upper_density_of_primes}) that
$$\sum_{p \,\in\, \PPb^+} \left(1 -\frac{r_{p^2}(X)}{p^2}\right)\! \ge \frac{1}{2} \sum_{p \, \in \, \PPb^+}
\frac{1}{p} = \infty.
$$
So we can conclude from Proposition \ref{th:modular_criterion}\ref{item:th:modular_criterion_4}, applied with $k_n$ equal to the square of the $n$-th prime of $\mathbf N^+$, that $X$ is small.
\end{proof}

We conclude our series of corollaries with a result on ``digit representations'', herein regarded as words in the free monoid over $\llb 0, b-1 \rrb$ for a fixed base $b \ge 2$. 

\begin{corollary}\label{thm:digits}
	Given $b \in \mathbf N_{\ge 2}$, let $\mathfrak{s} = (s_1, \ldots, s_k)$ be a non-empty sequence of length $k \in \mathbf N^+$ with entries in $\llb 0, b-1 \rrb$. Then the set $X$ of all $x \in \HHb$ which do not have the word $s_1 \cdots s_k$ appearing in their base-$b$ representation, is small.
\end{corollary}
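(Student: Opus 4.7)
The plan is to invoke Theorem \ref{thm:2.4} and show $\mathfrak b_{\mathbf H}^\ast(X) = 0$. By Proposition \ref{prop:2.6}, it suffices to exhibit a sequence of moduli $(m_n)$ with $r_{m_n}(X)/m_n \to 0$; I will take $m_n := b^n$ and bound $r_{b^N}(X)$ from above.

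The central estimate will be
\[
r_{b^N}(X) \le f(N) + \bigl|X \cap \llb 0, b^N - 1\rrb\bigr|,
\]
where $f(\ell)$ denotes the number of base-$b$ strings of length $\ell$ that avoid the word $s_1 \cdots s_k$ as a substring. To prove it, I fix a residue $h \in \llb 0, b^N - 1\rrb$ that is hit by some $x \in X$, write $x = q b^N + h$ with $q \in \mathbf N$, and split on $q$: if $q = 0$, then $h \in X$ (contributing to the second term), whereas if $q \ge 1$, then the base-$b$ representation of $x$ ends in the $N$-digit zero-padded representation of $h$, which must therefore avoid $s_1 \cdots s_k$ (contributing to the first term). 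I will next bound $f(\ell)$ by a block argument: decomposing a length-$\ell$ word into $\lfloor \ell/k \rfloor$ consecutive blocks of length $k$ plus a tail of length $< k$, the avoidance condition forces each block to differ from $s_1 \cdots s_k$, yielding $f(\ell) \le (b^k - 1)^{\lfloor \ell/k\rfloor} b^{\ell \bmod k} \le C (b\theta)^\ell$, where $\theta := (1 - b^{-k})^{1/k} \in (0,1)$ and $C = C(b,k)$. A routine geometric computation then gives both $f(N)/b^N \to 0$ and $\sum_{\ell = 0}^N f(\ell)/b^N \to 0$; combining these with the central estimate yields $r_{b^N}(X)/b^N \to 0$, which is what Proposition \ref{prop:2.6} needs.

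The only real subtlety is the case $q = 0$ in the main bound: the $N$-digit zero-padded representation of $h$ may spuriously contain $s_1 \cdots s_k$ even when the natural representation of $h$ does not, precisely when $s_1 = 0$ and $h$ has fewer than $N$ digits in base $b$. That is why one cannot simply collapse the estimate to $r_{b^N}(X) \le f(N)$ and must carry along the correction $|X \cap \llb 0, b^N - 1\rrb|$, which is however asymptotically negligible compared to $b^N$ by the same block estimate on $f$.
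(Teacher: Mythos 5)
Your argument is correct and follows the same block--decomposition strategy as the paper, but it is more careful on exactly the point where the paper's written proof has a gap. The paper takes moduli $b^{nk}$, bounds $r_{b^{nk}}(X)$ by the number of residues $h \in \llb 0, b^{nk}-1\rrb$ whose base-$b$ representation avoids $\mathfrak s$, then asserts this is ``equivalently'' the number of length-$nk$ strings over $\llb 0, b-1\rrb$ avoiding $\mathfrak s$, arriving at $r_{b^{nk}}(X) \le (b^k - 1)^n$. That equivalence fails precisely when $s_1 = 0$: a residue $h$ with fewer than $nk$ significant digits can lie in $X$ (and so be hit via $q = 0$) even though its zero-padded $nk$-digit string contains $\mathfrak s$. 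For a concrete counterexample to the paper's displayed bound, take $b = 10$ and $\mathfrak s = (0,1)$: every $h \in \llb 0, 99\rrb$ has a one- or two-digit representation, which cannot contain ``$01$'', so $r_{100}(X) = 100 > 99 = (b^k - 1)^n$ at $n = 1$. The ``only real subtlety'' you isolate is thus exactly the flaw in the published argument, and your estimate $r_{b^N}(X) \le f(N) + |X \cap \llb 0, b^N - 1\rrb|$, together with $f(\ell) \le C(b\theta)^\ell$ with $\theta = (1-b^{-k})^{1/k} < 1$ (which also bounds the correction term, summed over $\ell \le N$, by $o(b^N)$), repairs it and still yields $r_{b^N}(X)/b^N \to 0$ as needed for Proposition \ref{prop:2.6}. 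Your use of arbitrary moduli $b^N$ rather than $b^{nk}$ is only a cosmetic deviation. Well spotted.
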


\begin{proof}
Let $n \in \mathbf N^+$. Obviously, $r_{b^{nk}}(X)$ is bounded above by the number of residues $h \in \llb 0, b^{nk} - 1 \rrb$ whose base-$b$ representation does not contain the word $s_1 \cdots s_k$, or equivalently by the number of sequences $(a_0, \ldots, a_{nk-1}) \in \llb 0, b-1 \rrb^{nk}$ with $(a_i, \ldots, a_{i + k - 1}) \ne \mathfrak s$ for every $i \in \llb 0, k-1 \rrb$. It follows $r_{b^{nk}} \le (b^k - 1)^n$; whence
	$r_{b^{nk}}(X)/b^{nk} \to 0$ as $n \to \infty$. So, by Proposition \ref{th:modular_criterion}\ref{item:th:modular_criterion_2}, $X$ is small.
\end{proof}

\begin{remark}\label{rem:sparsity-does-not-imply-small}
Based on the previous results, one might be drawn to think that every ``sufficiently sparse'' set of integers is small. However, we have from Proposition \ref{prop:2.6} that the upper Buck density of the set $X := \{h! + h: h \in \NNb\}$ equals $1$ (no matter whether $\mathbf H = \mathbf N$ or $\mathbf H = \mathbf Z$), as it is easily seen that $r_k(X) = k$ for every $k \in \mathbf N^+$.
\end{remark}
The next theorems are about integral polynomials in one variable; it could be interesting to extend them to more general classes of integer-valued functions (see \S~\ref{sec:homogeneous} for a first step in this direction).

\begin{theorem}\label{thm:polynomialandDsmallsets}
Let $F: \mathbf Z \to \mathbf Z$ be a polynomial function with coefficients in $\HHb$. Then $F(\HHb)$ is small if and only if $\deg F \ne 1$.
\end{theorem}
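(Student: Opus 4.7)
The easy direction disposes of $\deg F \in \{0,1\}$: if $\deg F = 0$, then $F(\mathbf H)$ is a singleton and hence small by Lemma~\ref{lem:basicUD1}\ref{basicUD3}; if $\deg F = 1$, write $F(x) = ax + b$ with $a \in \mathbf Z \setminus \{0\}$, so $F(\mathbf H)$ contains an arithmetic progression of $\mathbf H$ of common difference $|a|$ (after reducing $b$ modulo $|a|$ in the case $\mathbf H = \mathbf Z$, or taking $b$ itself in the case $\mathbf H = \mathbf N$), and Lemma~\ref{lem:basicUD1}\ref{listUD4} yields $\mu^\ast(F(\mathbf H)) \ge 1/|a| > 0$ for every upper quasi-density $\mu^\ast$, ruling out smallness.

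For the main direction, assume $d := \deg F \ge 2$. Since $F(\mathbf H) \subseteq F(\mathbf Z)$, Theorem~\ref{thm:2.4} reduces the task to showing $\mathfrak b_\mathbf{Z}^\ast(F(\mathbf Z)) = 0$; my plan is to apply Proposition~\ref{th:modular_criterion}\ref{item:th:modular_criterion_4} with $(k_n)$ equal to the sequence of positive primes (so that $\sum_n k_n^{-1} = \infty$). Because reduction modulo $p$ commutes with polynomial evaluation, $r_p(F(\mathbf Z)) = |F(\mathbf Z/p\mathbf Z)|$, and the task becomes the divergence of
\[
\sum_{p \, \in \, \mathbf P^+} \left(1 - \frac{|F(\mathbf Z/p\mathbf Z)|}{p}\right).
\]

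For $d = 2$, completing the square in $F(x) = ax^2 + bx + c$ and exploiting that the squares form a subgroup of index $2$ in $(\mathbf Z/p\mathbf Z)^\ast$ yields $|F(\mathbf Z/p\mathbf Z)| = (p+1)/2$ for every odd prime $p \nmid a$; each term of the series is then eventually bounded below by $(p-1)/(2p) \ge \tfrac{1}{3}$ and the divergence is immediate. For $d \ge 3$ the bound is genuinely more subtle, and the cleanest route I know goes through the Chebotarev density theorem applied to the Galois group $G \le S_d$ of $F(x) - t$ over $\mathbf Q(t)$: a standard averaging argument identifies the limit of $\pi(N)^{-1} \sum_{p \le N} |F(\mathbf Z/p\mathbf Z)|/p$ with the proportion of $g \in G$ having at least one fixed point on the roots of $F(x) - t$, and Jordan's theorem---every transitive subgroup of $S_d$ with $d \ge 2$ contains a derangement---forces this proportion to lie strictly below $1$, from which the divergence of the series follows. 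I expect this last step to be the main obstacle; a proof that avoids Chebotarev density would be preferable, e.g.\ a direct ``missing residue'' count via the $y$-discriminant $\operatorname{disc}_x(F(x) - y) \in \mathbf Z[y]$, or an induction on $d$ (or on the leading coefficient of $F$) via the substitution $x \mapsto py + a$ at a root $a$ of $F \bmod p$, which produces a new integer polynomial of degree $d$ whose values are constrained to $p \cdot \mathbf Z$.
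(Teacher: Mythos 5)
Your handling of degrees $0$ and $1$ is correct and matches the paper's, and the degree-$2$ computation via completing the square is also sound. The problem is that for $d \ge 3$ you stop short: you sketch a route through the Chebotarev density theorem for the Galois group of $F(x)-t$ over $\mathbf Q(t)$ together with Jordan's theorem on derangements, describe this step as ``the main obstacle,'' and list alternative ideas (a discriminant count, an induction on $d$) that you do not carry out. That is a genuine gap, not a routine detail. As written, the proposal proves the theorem only for $\deg F \le 2$.

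The gap is also avoidable, because you are aiming for substantially more than the theorem needs. To invoke Proposition~\ref{th:modular_criterion}\ref{item:th:modular_criterion_4} it is enough to find a set of primes $p$ with $\sum 1/p = \infty$ and $r_p(F(\mathbf Z)) \le p-1$; that is, one only needs $F$ to fail to be injective modulo $p$ on a set of primes of positive density, not a uniform upper bound on the value-set proportion $|F(\mathbf Z/p\mathbf Z)|/p$. This is exactly what the paper does: by the Frobenius density theorem applied to the splitting field of $F$ over $\mathbf Q$ (not over $\mathbf Q(t)$), the set $P_F$ of primes $p$ modulo which $F$ has at least two roots has positive Dirichlet density---for instance, the primes that split completely already work, since the identity of the Galois group fixes all $\deg F \ge 2$ roots. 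For $p \in P_F$ two distinct residues map to the same value, so $r_p(F(\mathbf Z)) \le p-1$, and since $\sum_{p \in P_F} 1/p = \infty$, Proposition~\ref{th:modular_criterion}\ref{item:th:modular_criterion_4} applies. This single argument covers every $d \ge 2$ at once, needs neither Jordan's theorem nor the function-field Chebotarev you invoke, and is the route the paper actually takes. Your Chebotarev-over-$\mathbf Q(t)$ idea would, if completed, yield the stronger quantitative statement that the average value-set density is bounded away from $1$, but that strength is not required and comes at a real cost in difficulty.
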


\begin{proof}
If $F$ is constant, then its image is small, by Lemma \ref{lem:basicUD1}\ref{basicUD3}. If, on the other hand, $F$ is of degree $1$, then there exist $a,b \in \HHb$ with $a\neq 0$ such that $F(x) = ax+b$ for all $x \in \HHb$; and this in turn implies that, for every upper quasi-density $\mu^\ast$ on $\HHb$,
$$
\mu^\star(F(\HHb))=\mu^\star(a\cdot \HHb+b)=\frac{1}{|a|}>0.
$$
Accordingly, assume hereafter that $\deg F \ge 2$. Then a well-known theorem of Frobenius (see, e.g., \cite[p. 32]{MR1395088}) ensures that the set $P_F$ of primes $p \in \PPb^+$ for which $F$ has at least two roots modulo $p$, has non-zero \textit{Dirichlet density}, meaning that the limit
$$
\lim_{s \to 1^+} \frac{\sum_{p \,\in\, P_F} 1/p^s}{\sum_{p \,\in\, \mathbf P^+} 1/p^s}
$$
exists and is positive. It follows (by the monotone convergence theorem for series) that $\sum_{p \in P_F} 1/p = \infty$, because $\sum_{p \in \mathbf P^+} 1/p = \infty$ (cf.~Corollary \ref{cor:upper_density_of_primes}); and since $r_p(F(\ZZb)) \le p-1$ for every $p \in P_F$, we conclude from Proposition \ref{th:modular_criterion}\ref{item:th:modular_criterion_4} that $F(\mathbf H)$ is small.
\end{proof}

\begin{theorem}\label{thm:polynomialprimeandDsmall}
Let $F: \mathbf Z \to \mathbf Z$ be a non-constant polynomial function with integer coefficients. Then the set $X := \{k \in \HHb: F(k) \in \PPb\}$ is small.
\end{theorem}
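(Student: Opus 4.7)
The plan is to apply Proposition~\ref{th:modular_criterion}\ref{item:th:modular_criterion_3} with $\mu^\ast$ equal to the upper Buck density $\mathfrak b^\ast_{\mathbf H}$ and with $(k_n)_{n \ge 1}$ an enumeration of
\[
P_F := \{p \in \mathbf P^+ : F \text{ has at least one root modulo } p\};
\]
in combination with Theorem~\ref{thm:2.4}, this will suffice to conclude that $X$ is small.

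The key observation is that, whenever $p \in P_F$ and $h \in \llb 0, p-1 \rrb$ satisfies $F(h) \equiv 0 \pmod{p}$, any $k \in X$ with $k \equiv h \pmod{p}$ must have $p \mid F(k)$; and since $F(k)$ is prime, this forces $F(k) = \pm p$. Because $F$ is non-constant, at most $2 \deg F$ integers $k$ satisfy $F(k) = \pm p$, so the set $X \cap (p \cdot \mathbf H + h)$ is finite and, by Lemma~\ref{lem:basicUD1}\ref{basicUD3}, has zero upper Buck density. Letting $h$ range over the non-empty set $S_p$ of residues at which $F$ vanishes modulo $p$, one concludes that $w_p(X) \le p - |S_p| \le p - 1$ for every $p \in P_F$.

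It then remains to verify that $\sum_{p \in P_F} p^{-1} = \infty$. If $\deg F = 1$, write $F(x) = ax + b$ with $a \neq 0$; the residue $-b/a \bmod p$ is a root of $F$ modulo every prime $p$ not dividing $a$, so $P_F$ contains all but finitely many primes and the divergence is the standard one. If $\deg F \ge 2$, Frobenius's theorem (exactly as invoked in the proof of Theorem~\ref{thm:polynomialandDsmallsets}) guarantees that the primes modulo which $F$ has at least \emph{two} roots have positive Dirichlet density, hence so do the primes in the larger set $P_F$, and $\sum_{p \in P_F} p^{-1} = \infty$ follows by monotone convergence, as in the proof of Theorem~\ref{thm:polynomialandDsmallsets}. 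An application of Proposition~\ref{th:modular_criterion}\ref{item:th:modular_criterion_3} then closes the argument.

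The only non-routine ingredient is the arithmetic input that $\sum_{p \in P_F} p^{-1}$ diverges; this is the step I expect to be the main obstacle, but it reduces to the classical Frobenius density theorem in the non-linear case and to Euler's divergence of $\sum_p p^{-1}$ in the linear case, so nothing genuinely new is required beyond what has already appeared in the paper.
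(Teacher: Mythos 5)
Your proposal is correct and follows essentially the same route as the paper's proof: identify a positive-Dirichlet-density set $P_F$ of primes modulo which $F$ has a root, note that for each $p \in P_F$ and root $h_p$ the set $X \cap (p \cdot \mathbf H + h_p)$ is finite and hence null so that $w_p(X) \le p-1$, and then invoke Proposition~\ref{th:modular_criterion}. You add two small but worthwhile clarifications that the paper glosses over — treating the degree-$1$ case separately (the Frobenius input as cited in Theorem~\ref{thm:polynomialandDsmallsets} only applies for $\deg F \ge 2$), and applying the criterion to $\mathfrak b^\ast_{\mathbf H}$ together with Theorem~\ref{thm:2.4} rather than to an arbitrary upper quasi-density, which neatly sidesteps the fact that Proposition~\ref{th:modular_criterion} is only stated for (monotone) upper densities.
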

\begin{proof}
Let $\mu^\ast$ be an upper quasi-density on $\bf H$, and for each $n \in \mathbf N^+$ denote by $w_n(X)$ the number of residues $h \in \llb 0, k-1 \rrb$ such that $\mu^\ast(X \cap (k \cdot \mathbf H + h)) \ne 0$.

Similarly as in the proof of Theorem \ref{thm:polynomialandDsmallsets}, 
there is a set $P_F \subseteq \mathbf P^+$ such that $\sum_{p\, \in\, P_F} 1/p = \infty$ and $F$ has at least one zero modulo $p$ for every $p \in P_F$, that is, $p \mid F(h_p)$ for some $h_p \in \llb 0, p-1 \rrb$. 
In particular, it follows that, for each $p \in P_F$, the set $X \cap (p \cdot \mathbf H + h_p)$ is finite (otherwise, $|F(pk + h_p)| = p$ for infinitely many $k \in \mathbf H$, in contradiction to the fact that $F$ is non-constant), and hence, by Lemma \ref{lem:basicUD1}\ref{basicUD3}, $w_p(X) \le p-1$.

So, putting it all together, we get from Proposition \ref{th:modular_criterion}\ref{item:th:modular_criterion_2} that $\mu^\ast(X) = 0$, and this is enough to show that $X$ is small (since $\mu^\ast$ was arbitrary).
\end{proof}

\section{Binary quadratic forms}
\label{sec:homogeneous}
It is folklore that the asymptotic density of the set of integers that can be written as a sum of two squares is zero. In the present section, we generalize this to binary quadratic forms, while replacing the asymptotic density with an arbitrary upper quasi-density.

\begin{lemma}\label{lem:jacobi}
	Let $d$ be an integer, but not a perfect square. Then there exist $m \in \NNb^+$ and $r \in \NNb$ with $\gcd(m,r)=1$ such that, for every prime $p \in \mathbf P^+$ with $p \equiv r \bmod m$, $d$ is not a quadratic residue modulo $p$.
\end{lemma}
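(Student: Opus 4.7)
The plan is to realize the map $p \mapsto \left(\frac{d}{p}\right)$ (Legendre symbol), as $p$ varies over the primes not dividing $2d$, as the restriction of a non-trivial Dirichlet character modulo a suitable $m$, and then to choose $r$ where this character takes the value $-1$.

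First I would reduce to the case that $d$ is squarefree. Writing $d = d_0 s^2$ with $d_0$ squarefree, the hypothesis that $d$ is not a perfect square forces $d_0 \notin \{0,1\}$, and for every prime $p$ with $p \nmid s$ one has $\left(\frac{d}{p}\right) = \left(\frac{d_0}{p}\right)$. Replacing $d$ by $d_0$ and absorbing $s$ into the eventual modulus (so that any chosen $r$ is automatically coprime to $s$), I may assume $d$ is squarefree with $d \ne 1$. Factor $d = \epsilon \cdot 2^a \cdot q_1 \cdots q_k$ with $\epsilon \in \{\pm 1\}$, $a \in \{0,1\}$, and the $q_i$ distinct odd primes, and set $m := 8|d|$.

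Next, using multiplicativity of the Legendre symbol together with quadratic reciprocity, for every odd prime $p$ coprime to $m$,
\[
\left(\frac{d}{p}\right) = \left(\frac{\epsilon}{p}\right) \left(\frac{2}{p}\right)^{\!a} \prod_{i=1}^k \left(\frac{q_i}{p}\right),
\]
and each factor rewrites as a function of $p$ modulo a divisor of $m$: $\left(\frac{\epsilon}{p}\right)$ depends on $p \bmod 4$, $\left(\frac{2}{p}\right)$ on $p \bmod 8$, and reciprocity turns $\left(\frac{q_i}{p}\right)$ into $\pm\left(\frac{p}{q_i}\right)$ with a sign determined by $p \bmod 4$ and $q_i \bmod 4$. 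By the Chinese remainder theorem, the rule $\chi(p) := \left(\frac{d}{p}\right)$ therefore extends to a group homomorphism $\chi \colon (\mathbf{Z}/m\mathbf{Z})^\times \to \{\pm 1\}$, i.e., a real Dirichlet character modulo $m$.

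Once $\chi$ is shown to be non-trivial, any representative $r \in \{0, \ldots, m-1\}$ of a class with $\chi(r) = -1$ works: such $r$ lies in $(\mathbf{Z}/m\mathbf{Z})^\times$, so $\gcd(r,m)=1$; and for any prime $p \equiv r \bmod m$, coprimality with $m$ forces $p \nmid 2d$, so $\left(\frac{d}{p}\right) = \chi(p) = \chi(r) = -1$, as required. The main obstacle is therefore non-triviality of $\chi$. A direct route is a short case analysis: if some odd prime $q_j$ divides $d$, take $r$ congruent to a quadratic non-residue modulo $q_j$ and to $1$ modulo $8\prod_{i \ne j} q_i$, and reduce $\chi(r)$ via the formula above to $\left(\frac{r}{q_j}\right) = -1$; the remaining cases $d \in \{-1, \pm 2\}$ are handled by inspection with $m \in \{4,8\}$ (e.g., $d = -1$, $m = 4$, $r = 3$). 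A slicker alternative is to identify $\chi$ with the Kronecker symbol of the discriminant of $\mathbf{Q}(\sqrt{d})$, which is a primitive non-trivial real character precisely because $\mathbf{Q}(\sqrt{d}) \ne \mathbf{Q}$; this bypasses the bookkeeping entirely.
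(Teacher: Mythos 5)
Your proof is correct, and at its core it runs along the same lines as the paper's: compute the Jacobi symbol $\left(\frac{d}{p}\right)$ via multiplicativity and quadratic reciprocity, and use the Chinese remainder theorem to construct a residue class $r \bmod m$ that forces the symbol to equal $-1$, with the non-triviality coming from selecting a quadratic non-residue modulo a single odd prime divisor of the squarefree part of $d$. The difference is in the preliminary decomposition: you pass to the squarefree part $d_0$ of $d = d_0 s^2$ and take $m = 8|d_0|$ (enlarged by the primes dividing $s$), whereas the paper writes $d = 2^k t^2 u \varepsilon$ with $u$ squarefree and works modulo $8u$. Your reduction is actually the cleaner one: the paper's modulus $8u$ does not exclude primes $p$ dividing $t$, and for such $p$ the claimed identity $\left(\frac{d}{p}\right)=\left(\frac{2^k}{p}\right)\!\left(\frac{\varepsilon}{p}\right)\!\left(\frac{u}{p}\right)$ breaks down because the omitted factor $\left(\frac{t^2}{p}\right)$ equals $0$ rather than $1$; the needed fix is precisely what you flag in your parenthetical about absorbing $s$ into the modulus. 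That said, you should make that parenthetical explicit rather than call the coprimality ``automatic'': one must take $m$ to be a multiple of every prime factor of $s$ and impose $r \equiv 1$ modulo the extra primes, then lift the character accordingly. The reframing as a real Dirichlet character, and the shortcut via the Kronecker symbol of the discriminant of $\mathbf{Q}(\sqrt{d})$, are conceptually tidier ways to package the same periodicity; the paper simply verifies the required congruence computation directly from the Jacobi-symbol identities in Apostol, and the two routes are essentially interchangeable.
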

\begin{proof}
	Write $d = 2^k t^2 u\, \varepsilon $, where $t$ and $u$ are odd positive integers, $u$ is squarefree, $k$ is a non-negative integer, and $\varepsilon$ is the sign of $d$ (i.e., $\varepsilon = 1$ if $d \ge 1$, and $\varepsilon = -1$ otherwise). 
	
	If $u = 1$, then it is sufficient to consider that, for every odd prime $p \in \mathbf P^+$,
	we have from \cite[Theorems 9.9(a), 9.5, and 9.10]{Apo} that
	$$
	\left(\frac{d}{p}\right) = \left(\frac{2^k}{p}\right)\! \left(\frac{\varepsilon}{p}\right) = (-1)^{\frac{1}{8}k(p^2-1)} \varepsilon^{\frac{1}{2}(p-1)},
	$$
	where $\left(\frac{\cdot}{\cdot}\right)$ is a Jacobi symbol, see \cite[\S~9.7]{Apo}; whence we can take $p \equiv 5 \bmod 8$ if $k$ is odd, and $p \equiv 3 \bmod 4$ if $k$ is even (note that, in the latter case,  $\varepsilon$ must be equal to $-1$, or else $d$ would be a perfect square).
	
	Thus, assume from now on that $u \ge 3$. Then $u = q_1 \cdots q_n$, where $q_1, \ldots, q_n \in \mathbf P^+$ are pairwise distinct odd prime numbers; and it follows by \cite[Theorems 9.1]{Apo} that there exist $r_1, \ldots, r_n \in \mathbf N$ with 
	\[
	\left(\frac{r_1}{q_1}\right) = -1 
	\quad\text{and}\quad
	\left(\frac{r_i}{q_i}\right) = 1 
	\text{ for each }i \in \llb 2, n \rrb.
	\]
	As a result, we conclude from the Chinese remainder theorem and \cite[Theorem 9.9(c)]{Apo} 
	that there is $r \in \NNb$ with the property that
	$$
	\left(\frac{r}{q_1}\right) = -1
	\quad\text{and}\quad
	\left(\frac{r}{q_i}\right) = 1\text{ for each }i \in \llb 2, n \rrb.
	$$
	So, letting $s \in \NNb$ be such that $8s + 1 \equiv r \bmod u$ (this is possible because $u$ is odd), we have from \cite[Theorem 9.9, parts (c) and (b)]{Apo} that
	\begin{equation}
	\label{equ:jacobis}
	\left(\frac{8s + 1}{u}\right) = \bigg(\frac{r}{u}\bigg) = \prod_{i=1}^n \left(\frac{r}{q_i}\right) = -1,
	\end{equation}
	and this implies in particular that $8s+1$ and $u$ are coprime.
	
	Consequently, if $p \equiv 8s + 1 \bmod 8u$, then $\gcd(2u,p) = 1$ and $p \equiv 1 \bmod 8$;
	and we get from \cite[Theorems 9.5, 9.9(a), 9.9(d), 9.10 and 9.11]{Apo} that
	$$
	\left(\frac{d}{p}\right) = \left(\frac{2^k}{p}\right)\! \left(\frac{\varepsilon}{p}\right)\! \left(\frac{u}{p}\right) = (-1)^{\frac{1}{8}k(p^2-1)+\frac{1}{4}(p-1)(u-1)}\varepsilon^{\frac{1}{2}(p-1)} \bigg(\frac{p}{u}\bigg) =
	\left(\frac{8s + 1}{u}\right),
	$$
	which, together with \eqref{equ:jacobis}, yields $\left(\frac{d}{p}\right) = -1$ and completes the proof.
\end{proof}

\begin{theorem}
\label{thm:4.2}
Let $\mu^\ast$ be an upper quasi-density on $\HHb$, and set $X := \{ax^2 + bxy + cy^2: x, y \in \HHb\}$ and $D := b^2 - 4ac$, where $a,b,c \in \HHb$ are fixed. The following hold:
\begin{enumerate}[label=\textup{(\roman{*})}]
\item If $D$ is not a perfect square or $D = 0$, then $X$ is small.
\item If $D$ is a non-zero perfect square and either $ac = 0$ or $\mathbf H = \bf Z$, then $\mu^\ast(X) > 0$.
\end{enumerate}
\end{theorem}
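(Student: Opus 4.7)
The plan is to dissect the statement along two binary dichotomies ($D = 0$ vs.\ $D$ non-square in Part (i), and $ac = 0$ vs.\ $\HHb = \ZZb$ in Part (ii)) and reduce each case to one of the structural tools already at hand: Corollary \ref{cor:upper_density_of_perfect_powers}, Proposition \ref{th:modular_criterion}, or Lemma \ref{lem:basicUD1}\ref{listUD4}. The thread running through both parts is the completing-the-square identity
\[
4a \cdot Q(x,y) = (2ax + by)^2 - Dy^2, \quad Q(x,y) := ax^2 + bxy + cy^2,
\]
which either pins down divisibility conditions on values of $Q$ or, when $D$ is a square, factors $Q$ to expose an arithmetic progression contained in $X$.

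For Part (i), I would first dispose of $D = 0$: if $a \ne 0$ the identity reduces to $4aQ = (2ax+by)^2$, so the scaled set $(4|a|) \cdot X$ or its negation embeds into the set $S$ of integer squares, small by Corollary \ref{cor:upper_density_of_perfect_powers}. Invariance of the upper Buck density on $\ZZb$ under negation (from $r_k(-Y) = r_k(Y)$ and Proposition \ref{prop:2.6}), combined with Corollary \ref{cor:idealDsmall} and axiom \ref{item:F4}, forces $X$ to be small; the sub-case $a = 0$ gives $b = 0$, hence $X = \{cy^2\}$, handled analogously. When $D \ne 0$ is not a perfect square, Lemma \ref{lem:jacobi} supplies $m \in \NNb^+$ and $r \in \NNb$ with $\gcd(m,r) = 1$ such that $D$ is a quadratic non-residue modulo every prime $p \equiv r \pmod m$. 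Let $(p_n)_{n \ge 1}$ enumerate in increasing order those such primes that are coprime to $2aD$. The key claim is that $p_n \mid Q(x_0, y_0)$ forces $p_n^2 \mid Q(x_0, y_0)$: the identity yields $(2ax_0 + by_0)^2 \equiv Dy_0^2 \pmod{p_n}$, and non-residueship of $D$ together with $p_n \nmid 2a$ forces $p_n \mid y_0$, whence $p_n \mid 2ax_0$, hence $p_n \mid x_0$. Consequently $X$ misses the $p_n - 1$ residues $\{p_n, 2p_n, \ldots, (p_n-1)p_n\}$ modulo $p_n^2$, so $1 - r_{p_n^2}(X)/p_n^2 \ge (p_n - 1)/p_n^2$; Dirichlet's theorem on primes in arithmetic progressions gives $\sum_n 1/p_n = \infty$, and Proposition \ref{th:modular_criterion}\ref{item:th:modular_criterion_4} (applied to the pairwise coprime sequence $k_n := p_n^2$) concludes that $X$ is small.

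For Part (ii), write $D = d^2$ with $d \ge 1$. If $ac = 0$, WLOG $a = 0$ (the case $c = 0$ is symmetric); then $b \ne 0$, and taking $y = 1$ with $x$ ranging over $\HHb$ gives $X \supseteq b \cdot \HHb + c$, an arithmetic progression of common difference $|b|$, so $\mu^\ast(X) \ge 1/|b| > 0$ by Lemma \ref{lem:basicUD1}\ref{listUD4}. If $\HHb = \ZZb$ and $ac \ne 0$, the factorization
\[
4a \cdot Q(x,y) = (2ax + (b-d)y)(2ax + (b+d)y)
\]
is available over $\ZZb$. Set $g := \gcd(2a, b-d)$ and fix $(x_0, y_0) \in \ZZb^2$ with $2ax_0 + (b-d)y_0 = g$; as $t$ ranges over $\ZZb$, the integer point $(x_0 + (b-d)t/g, \, y_0 - 2at/g)$ keeps the first factor equal to $g$ while the second factor sweeps the arithmetic progression $g + 2dy_0 - (4ad/g)\ZZb$. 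Dividing by $4a$, the corresponding $Q$-values form an AP of common difference $d$, so $X \supseteq s_0 + d\,\ZZb$ for some $s_0 \in \ZZb$ (automatically an integer because $Q$ takes integer values at integer points), and Lemma \ref{lem:basicUD1}\ref{listUD4} yields $\mu^\ast(X) \ge 1/d > 0$.

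The hard part will be the modular analysis in Part (i) with $D$ non-square: the completing-the-square identity has to be combined with Lemma \ref{lem:jacobi} to upgrade ``$p \mid Q$'' all the way to ``$p^2 \mid Q$'', since only this square-divisibility jump makes $r_{p^2}(X)/p^2$ drop far enough below $1$ for Proposition \ref{th:modular_criterion}\ref{item:th:modular_criterion_4} to fire. Once secured, Dirichlet's theorem closes Part (i). In Part (ii), the only delicate observation is the simultaneous linear-Diophantine arrangement that lets a single integer parameter $t$ keep one factor constant while the other sweeps a full arithmetic progression.
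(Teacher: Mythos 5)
Your proposal is correct and follows essentially the same route as the paper: the same case split, the same completing-the-square identity, Lemma \ref{lem:jacobi} plus Dirichlet to kill the non-square-discriminant case, and in Case (ii) with $\HHb=\ZZb$ the same device of holding one linear factor constant while the other sweeps an arithmetic progression. The only real differences are cosmetic improvements — you invoke Proposition \ref{th:modular_criterion}\ref{item:th:modular_criterion_4} directly on $r_{p^2}$ rather than detouring through $w_{p^2}$, you make explicit the negation-invariance $r_k(-Y)=r_k(Y)$ needed when $a<0$ (a point the paper leaves tacit), and your B\'ezout-style parameterization $(x_0+(b-d)t/g,\,y_0-2at/g)$ produces an arithmetic progression of common difference $d$ inside $X$, giving a cleaner lower bound than the paper's explicit choice $(-(b-q)z,\,2a(z+1))$.
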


\begin{proof}
Let $w_k(\cdot)$ be defined as in Proposition \ref{th:modular_criterion}. 
We have several cases.

\vskip 0.2cm
\textsc{Case 1}. $D$ is not a perfect square (and hence $ac \ne 0$). In light of Lemma \ref{lem:jacobi}, there are $m \in \NNb^+$ and $r \in \NNb$ such that, for every $p \in \mathbf P^+$ with $p \equiv r \bmod m$, $D$ is not a quadratic residue modulo $p$. Accordingly, let 
$P$ be the set of all primes $p \equiv r \bmod m$ such that $p \ge 2 + \max(|a|, |b|, |c|) \ge 3$.

    If $p \in P$, $z \in \mathbf Z$, and $p \nmid z$, then $ax^2 + bxy + cy^2 \ne pz$ for all $x, y \in \bf Z$: Otherwise, $(2ax + by)^2 \equiv Dy^2 \bmod p$, which is only possible if $p \mid y$ (since $D$ is not a quadratic residue modulo $p$); consequently, we find that $p \mid 2ax$, and hence $p \mid x$ (because $p \nmid 2a$); this, however, means that $p \mid ax^2 + bxy + cy^2$, contradicting that $p \nmid z$.

    Thus, $w_{p^2}(X) \le r_{p^2}(X) \le p^2 - p + 1 \le p^2 - \frac{1}{2}p$ for every $p \in P$, which implies,
    by Theorem \ref{th:modular_criterion}\ref{item:th:modular_criterion_3}, that $\mu^\ast(X) = 0$, since we have from Dirichlet's theorem on primes in arithmetic progressions (see, e.g., \cite[Corollary 4.12(c)]{MoVa06}) that
    $$
    \sum_{p \in P} \left(1 - \frac{w_{p^2}(X)}{p^2}\right)\! \ge \frac{1}{2}\sum_{p \in P} \frac{1}{p} = \infty.
    $$

\textsc{Case 2}: $D = ac = 0$. Note that $b = 0$, and assume by symmetry that $c = 0$. It follows that $X =  \{a x^2: x \in \HHb\}$, and Corollaries \ref{cor:idealDsmall} and \ref{cor:upper_density_of_perfect_powers} yield $\mu^\ast(X) = 0$.

\vskip 0.2cm
\textsc{Case 3}: $D$ is a non-zero perfect square and $ac = 0$. We have $|b| = \sqrt{D} > 0$, and it is immediate that $X \supseteq |b| \cdot \HHb + a+c$. Hence $\mu^\ast(X) \ge |b|^{-1} > 0$, by Lemma \ref{lem:basicUD1}\ref{listUD4}.

\vskip 0.2cm
\textsc{Case 4}: $D = q^2$ for some $q \in \NNb$ and $ac \ne 0$. Let $\varepsilon$ be the sign of $a$, and observe that $b - q \ne 0$
    and, by axiom \ref{item:F4}, $\mu^\ast(X) = 4|a|\, \mu^\ast(4|a| \cdot X)$. Next, notice that
    \begin{equation}
    \label{equ:alternative_representation_of_4aX}
    \begin{split}
    4|a| \cdot X 
    	& = \{(2ax + (b-q)y)(2ax + (b+q)y)\varepsilon: x,y \in \HHb\}.
    \end{split}
    \end{equation}
    Building on these premises, we distinguish two subcases.
 \vskip 0.2cm
 \emph{Case 4.1:} $q = 0$. 
    It is clear from \eqref{equ:alternative_representation_of_4aX} that $4|a| \cdot X \subseteq \{x^2\varepsilon: x \in \HHb\}$, and we derive from Corollaries \ref{cor:idealDsmall} and \ref{cor:upper_density_of_perfect_powers} that $\mu^\ast(X) =  4|a| \, \mu^\ast(4|a| \cdot X) = 0$.

\vskip 0.2cm
\emph{Case 4.2:}
    $q \ne 0$ and $\HHb = \ZZb$. Denote by $Y$ the set
	$$
	\{(-2a(b-q)z + 2a(b-q)(z+1))(-2a(b-q)z + 2a(b+q)(z+1))\varepsilon: z \in \ZZb\}.
	$$
    By \eqref{equ:alternative_representation_of_4aX}, $Y$ is a subset of $4|a| \cdot X$, and we see that
    \begin{equation*}
    \label{equ:the_special_case_when_H=Z}
    \begin{split}
    Y & = 4a^2 \cdot \{2q (b-q) \varepsilon z + (b^2-q^2)\varepsilon: z \in \ZZb\} \\
    & = 8a^2q\, |b-q| \cdot \ZZb + 4a^2(b^2-q^2) \varepsilon.
    \end{split}
    \end{equation*}
    In other words, $4|a| \cdot X$ contains an arithmetic progression of $\bf Z$, which, along with 
    Lemma \ref{lem:basicUD1}\ref{listUD4} 
    and the above, implies $\mu^\ast(X) = 4|a|\,\mu^\ast(4|a| \cdot X) > 0$.\qedhere
\end{proof}

\begin{remark}
	Set $X := \{ax^2 + bxy + cy^2: x, y \in \mathbf N\}$, where $a, b, c \in \mathbf N$, $ac \ne 0$, and $b^2 - 4ac = q^2$ for some $q \in \mathbf N^+$. This case is not covered by Theorem \ref{thm:4.2}, and it turns out that $\mu^\ast(X)$ is zero for some choices of the upper density $\mu^\ast$ and positive for others.
	
	In fact, we will prove that $\mathsf{d}_\NNb^\ast(X) = 0 \ne \mathfrak b_\NNb^\ast(X)$. To begin, it is easy to check that
	$$
	4a \cdot X = \{(2ax + (b-q)y)^2 + 2 q y: x,y \in \NNb\}.
	$$
	Consequently, we have that $2a(b-q) \cdot A \subseteq 4a \cdot X \subseteq B$,
	where
	$$
	A := \{2a(b-q)(u+v)^2 + 4aq v: u, v \in \mathbf N\} \subseteq \mathbf N
	$$
	and
	$$
	B := \{xy: x, y \in \mathbf N \text{ and } x \le y \le (2q+1) x\} \subseteq \mathbf N.
	$$
	Since $\mathsf d_\NNb^\ast$ and $\mathfrak b_\NNb^\ast$ are upper densities (and hence satisfy \ref{item:F2} and \ref{item:F4}), it follows that
	$$
	\mathfrak b_\NNb^\ast(X) = 4a\, \mathfrak b_\NNb^\ast(4a \cdot X) \ge 
	2(b-q)^{-1} \mathfrak b_\NNb^\ast(A) 
	\quad\text{and}\quad
	\mathsf d_\NNb^\ast(X) \le \mathsf d_\NNb^\ast(B).
	$$
	Thus, it suffices to show that $\mathfrak b_\NNb^\ast(A) \ne 0 = \mathsf d_\NNb^\ast(B)$. To this end, fix $k \in \mathbf N^+$. We have
	$$
	\{2a(b-q)(ku  + 1)^2 + 4aq ((k-1)u+1): u \in \mathbf N\} \subseteq A,
	$$
	which means that $r_k(A)$ is larger than or equal to the number of residues $h \in \llb 0, k - 1 \rrb$ such that $2a(b-q) + 4aq - 4aq u \equiv h \bmod k$ for some $u \in \mathbf N$. Then $r_k(A) \ge \lfloor (4aq)^{-1} k \rfloor$, and we conclude from Proposition \ref{prop:2.6} that $\mathfrak b_\NNb^\ast(A) \ge (4aq)^{-1} > 0$.
	
	As for the rest, let $n \in \mathbf N^+$ and pick $z \in B \cap \llb 1, n \rrb$. By construction, $z = xy$ for some $x, y \in \mathbf N^+$ with $x \le y \le (2q+1) x$. Therefore, we have $
	y^2 \le (2q+1) xy \le (2q+1)n$,
	and hence 
	$
	x \le y \le \sqrt{(2q+1)n}$. In other words, $B \cap \llb 1, n \rrb$ is a subset of the mul\-ti\-pli\-ca\-tion table for positive integers $\le \sqrt{(2q+1)n}$. So, we get from a classic result of Erd\H{o}s \cite[Part 3]{Erd1955} that $\bigl|B \cap \llb 1, n \rrb\bigr| = o(n)$ as $n \to \infty$, which yields $\mathsf d_\NNb^\ast(B) = 0$.
\end{remark}

\section{Closing remarks and open questions}\label{sec:closings}
We conclude the paper with a couple of open questions naturally stemming from the results of \S\S~\ref{sec:upper_densities} and \ref{sec:homogeneous} that we have not been able to settle.

\begin{question}\label{q:5.1}
	By Corollary \ref{cor:idealDsmall} and Lemma \ref{lem:basicUD1}\ref{basicUD3}, the family $\mathcal{S}$ of all small subsets of $\bf H$ is an ideal containing all finite subsets of $\mathbf H$; and by Theorem \ref{thm:2.4}, it coincides with the intersection of $\bf H$ and the zero set of the upper Buck density on $\mathbf Z$. One may wonder if $\mathcal{S}$ is also ``closed under products'', meaning that 
	$$
	XY := \{xy: (x,y) \in X \times Y\} \in \mathcal{S},
	\quad \text{for all }X, Y \in \mathcal{S}.
	$$
	The answer is negative. 
	Indeed, let $\mu^\ast$ be an upper quasi-density on $\HHb$, and let $A$ (resp., $B$) be the set of all $x \in \HHb$ whose positive prime divisors are all equal to $1$ (resp., $3$) modulo $4$.
	Then 
	$\{x \in A: p \mid x\}$ is empty for every $p \in \mathbf P^+$ with $p \equiv 3 \bmod 4$; and in a similar way, $\{y \in B: q \mid y\} $ is empty for every $q \in \mathbf P^+$ with $q \equiv 1 \bmod 4$ (observe that $0 \notin A \cup B$, because $k \mid 0$ for all $k \in \mathbf N$). Therefore, we get from
	Dirichlet's theorem on primes in arithmetic progressions, Lemma \ref{lem:basicUD1}\ref{basicUD3}, and Prop\-o\-si\-tion \ref{prop:3.3} that both $A$ and $B$ are small. But $AB = 2 \cdot \mathbf H + 1$ (note that $\mathbf H \cap \{\pm 1\} \subseteq A \cap B$), and we have by \ref{item:F4} that $\mu^\ast(2 \cdot \mathbf H + 1) = \frac{1}{2}$.

	Likewise, $\mathcal{S}$ is not ``closed under sums'' either, in the sense that there is $X \in \mathcal{S}$ such that $X+X := \{x+y: x, y \in X\} \notin \mathcal{S}$. In fact, the set $\mathcal Q :=\{x^2+y^2: x,y \in \HHb\} \subseteq \bf H$ is small by Theorem \ref{thm:4.2}; but $\mathcal Q + \mathcal Q = \mathbf N$ (by Lagrange's four square theorem), and hence the upper Buck density of $\mathcal Q + \mathcal Q$ is $1$ (regardless of whether $\mathbf H = \mathbf N$ or $\mathbf H = \mathbf Z$). 
	
	So, we may ask if, for a given $\alpha \in [0,1]$, there exists a set $X \in \mathcal{S}$ such that $\mu_\ast(XX) = \mu^\ast(XX) = \alpha$ (resp., $\mu_\ast(X+X) = \mu^\ast(X+X) = \alpha$) for every upper quasi-density $\mu^\ast$ on $\HHb$, where $\mu_\ast$ is the conjugate of $\mu^\ast$ (as defined in the first lines of \S~\ref{sec:preliminaries}). Questions in a similar vein have been recently answered in \cite{LT20}. 

	Questions in the same vein have been recently addressed by various authors in the special case of the upper asymptotic density, see \cite[Theorem 1.3 and Question Q4]{Fa-Gr-Pa-So-2018}, \cite[\S\S~2 and 4]{Heg-Hen-Pa-2019}, and \cite[Theorems 1.1.a), 1.5, and 1.8]{Bi-Hen2019}.
\end{question}

\begin{question}
	Let $k \in \mathbf N$. By Corollary \ref{cor:upper_density_of_primes}, the set $Y^{(k)}$ of all $x \in \HHb$ that factor into a product of at most $k$ primes (counted with multiplicity), is small. Does the same hold true for the inverse image of $\llb 0, k \rrb$ under the function $\omega: \mathbf H \setminus \{0\} \to \mathbf N$ that maps a non-zero integer $x \in \HHb$ to the number of primes $p \in \mathbf P^+$ such that $p \mid x$? If yes, this would be a stronger result than Corollary \ref{cor:upper_density_of_primes}, because every subset of a small set is small (Corollary \ref{cor:idealDsmall}), and it is clear that $Y^{(k)} \subseteq \omega^{-1}(\llb 0, k \rrb)$.
\end{question}

\section*{Acknowledgments}

We thank the anonymous referees whose comments helped to improve the paper.

\end{document}